\newtheorem{proposition}{Proposition}[section]
\newtheorem*{proposition*}{Proposition}
\newtheorem{lemma}[proposition]{Lemma}
\newtheorem*{lemma*}{Lemma}
\newtheorem{corollary}[proposition]{Corollary}
\newtheorem*{corollary*}{Corollary}
\newtheorem{theorem}[proposition]{Theorem}
\newtheorem*{theorem*}{Theorem}
\theoremstyle{definition}
\newtheorem{remark}[proposition]{Remark}
\newtheorem*{remark*}{Remark}
\newtheorem{example}[proposition]{Example}
\newtheorem*{example*}{Example}
\newcommand{\stirlingii}{\genfrac{\{}{\}}{0pt}{}}
\newcommand{\cA}{\mathcal{A}}
\newcommand{\cB}{\mathcal{B}}
\newcommand{\CC}{\mathbb{C}}
\newcommand{\cO}{\mathcal{O}}
\newcommand{\OO}{\mathcal{O}}
\newcommand{\PP}{\mathbb{P}}
\DeclareMathOperator{\HH}{H}
\DeclareMathOperator{\rk}{rk}
\DeclareMathOperator{\Div}{Div}
\DeclareMathOperator{\Td}{Td}
\numberwithin{equation}{section}
\title{Equivariant Euler characteristics on permutohedral varieties}
\author{Vincenzo Galgano\footnote{Dipartimento di Matematica, Università di Trento, Via Sommarive 14 38123 Trento, Italy; ORCID: 0000-0001-8778-575X: \texttt{vincenzo.galgano@unitn.it}} , Hanieh Keneshlou\footnote{{Department of Mathematics and Statistics, Universitätsstraße 10, 78464 Konstanz, Germany: \texttt{hanieh.keneshlou@uni-konstanz.de} }} , Mateusz Michałek\footnote{{Department of Mathematics and Statistics, Universitätsstraße 10, 78464 Konstanz, Germany: \texttt{mateusz.michalek@uni-konstanz.de}}}}
\date{\today}
\begin{document}

\maketitle

\begin{abstract}
\noindent By the work of J.~Huh, one can interpret binomial coefficients as a solution to an intersection problem on a permutohedral variety $X_E$. Applying Hirzebruch-Riemann-Roch, this intersection problem is equivalent to computing Euler characteristic of a specific element of $K$-theory of $X_E$. This element has a natural lifting to equivariant $K$-theory and thus the Euler characteristic may be upgraded to a Laurent polynomial. We provide and implement three different approaches, in particular a recursive one, to computing these polynomials.
\hfill\break
\end{abstract}

\tableofcontents

\section*{Acknowledgement}
VG has been supported by the italian “National Group for Algebraic and Geometric Structures, and their Applications” (GNSAGA-INdAM). MM and HK are funded by the Deutsche Forschungsgemeinschaft — Projektnummer 467575307. 
We would like to thank Andrzej Weber for many useful discussions. We thank Julian Weigert for implementing the recursive formula, as well as observing identities among involved Stirling numbers.

\section{Introduction}
\indent \indent Binomial coefficients have played a very important role in mathematics for over thousand years. Indeed, the first known written exposition of what we now call Pascal triangle comes from an Indian mathematician Halayudha from 10-th century. In this article, we study naturally defined multivariate Laurent polynomials which are extensions of binomial coefficients: evaluated at $(1,\dots,1)$ they give binomial coefficients. \\
\indent The motivation for our work comes from a modern, groundbreaking work of June Huh \cite{huh2012milnor}, who related basic combinatorial invariants of graphs and matroids with multidegree of complex projective algebraic varieties, and more precisely with intersection theory on a permutohedral variety. To any $\CC$-representable matroid $M$ on a set $E$ of cardinality $n+1$ one associates a class $Y_M$ in the Chow ring of an $n$-dimensional permutohedral variety $X_E$, the latter being the blow-up of the projective space $\PP^n$ at its torus invariant linear subspaces, or equivalently the resolution of the projective space $\mathbb P^n$ at the Cremona singularities. In particular, $X_E$ comes with two projections to $\PP^n$ and we consider the divisors $\Gamma$ and $\Delta$ given by the pullback of a hyperplane under the respective projections. If we fix a representation of $M$, we may consider $Y_M$ as an irreducible subvariety of $X_E$. By \cite{huh2012milnor, adiprasito2018hodge} the absolute value of the $a$-th coefficient of the reduced chromatic polynomial $\chi_M$ of $M$ is equal to $\int_{X_E} Y_M [\Gamma]^a [\Delta]^{r-a}$, where the product is taken in the Chow ring $A(X_E)$ of $X_E$ and $r$ is the rank of $M$.\\ 
\indent In fact, in \cite{adiprasito2018hodge} the authors associate to any matroid a ring, called the Chow ring, and prove (analogue of) Hard Lefschetz theorem and Hodge-Riemann relations for it. This major breakthrough allowed, among others, to prove log-concavity conjectures for coefficients of the characteristic polynomial of a matroid. This was a generalization of the case of matroids representable over $\CC$, where the Chow ring was indeed a Chow ring of a complex, compact variety and Lefschetz theorem was known \cite{huh2012milnor}. In this article we focus our attention on other properties of this ring, looking at intersections with pull-backs of hyperplane divisors from the equivariant perspective and through the isomorphism of the Chow ring with the $K^0$-theory ring given by the Chern character.\\
\indent Arguably, the simplest case of this theory is when $M$ is a uniform matroid of rank $r$. In this case, the absolute value of the $i$-th coefficient (from the top) of the reduced chromatic polynomial equals $\binom{n}{i}$. Further, the class $Y_M$ is simply given as a power of $[\Gamma]$. We thus obtain
\[ \int_{X_E}[\Gamma]^a\cdot [\Delta]^{n-a}={n\choose a} \ . \]
\indent Starting from the above equality, our aim is to take one step further and take into account the natural torus action on $X_E$. The key for this is the passage from the Chow-theoretic expression of this intersection to the $K$-theoretic expression of it in the Grothendieck ring $K^0(X_E)$ via the Hirzebruch-Riemann-Roch formula $\chi(\cal F)= \int_{X_E} ch(\cal F)\Td(X_E)$. This leads in our case to
\begin{equation}
\chi\left( ([\OO_{X_E}]-[\OO_{X_E}(-\Gamma)])^{\otimes a}\otimes ([\OO_{X_E}]-[\OO_{X_E}(-\Delta)])^{\otimes n-a} \right) = \int_{X_E}[\Gamma]^a\cdot [\Delta]^{n-a} \ .  
\end{equation}
We note that all of the line bundles on the left-hand side come with a natural torus action. 
Thus our computation may be upgraded to the computation of the equivariant Euler characteristic of a certain class in the equivariant Grothendieck ring $K^{0}_T(X_E)$. This is now a symmetric Laurent polynomial that evaluates to the given binomial coefficient on $(1,\dots, 1)$. The polynomials we obtain are highly non-trivial and we provide three different ways to compute them. The challenge to compute the given Euler characteristic lead us, among others, to proving various vanishing theorems, exact formulas for equivariant cohomology groups for various bundles on the permutohedral variety as well as exhibiting relations to other mathematical object, like Stirling numbers of the second kind. We find these results interesting on their own and plan to exploit them further in future. \\
\hfill\break
\indent This paper is structured as follows. In Section \ref{prem} we start by recalling the construction of the permutohedral variety and the description of important divisor classes on it. We also briefly recall the equivariant localization formula and discuss in detail the case of the projective line. In Section \ref{Comput}, we start by proving the vanishings of the intermediate cohomology groups for various sheaves, which simplifies the expression of the Euler characteristic. Here our main tools are toric Kawamata-Viehweg vanishing and Batyrev-Borisov vanishing.  Combined with equivariant Serre duality, we compute the equivariant Euler characteristics as integral combinations of global section of toric line bundles, cf.~formula \eqref{chi as global sections}. This results in a direct, albeit long, formula where the final Laurent polynomial is a result of identifying lattice points with Laurent monomials and summing over various polytopes. It could be seen as an analogue of the formula \cite[Theorem 3.11]{mt}, where the variety of complete quadrics is considered, while we deal with the permutohedral variety. Our second approach is based on using the so-called equivariant localization formula. Here the final result is presented as a sum of rational functions, cf.~formula \eqref{chi(F)withSigma}. In Section \ref{Recursive} we provide a recursive formula that reduces the computation of the Euler characteristic to the computation of classes on lower dimensional permutohedra. It could be regarded as an equivariant analog of the formulas in the proof of \cite[Lemma I.1]{tautologicalclasses}. However, there is one central difference: in non-equivariant Chow ring intersecting more divisors than the dimension of the variety always gives zero, and thus such contributions disappear; in the equivariant Chow ring, we obtain several contributions related to intersecting more divisors than the dimension of the variety.\\ 
\hfill\break
\indent The three presented approaches are implemented in the computer algebra system \textit{Macaulay2} \cite{M2} and they are available under \texttt{EquivariantEC.m2} as supplementary material to this article.
Finally, we would like to point out that our approach to upgrading important numeric invariants to polynomials is certainly not the only possibility. In particular, in the recent work \cite{tautologicalclasses} there is an explicit formula for $Y_M$ in the equivariant Chow ring. We leave the implementation and comparison of those for future work.

\section{Preliminaries}\label{prem}

\subsection{Permutohedral varieties and special divisors}

\paragraph{Permutohedral varieties.} We briefly recall the construction and basic properties of the permutohedral variety: for more details we refer to \cite{huh}. Our main reference for the general theory of toric varieties is \cite{cox} but we also suggest \cite[Chapter 8]{michalek2021invitation} for a fast introduction to it. \\
\indent The 
{\em permutohedron} $\Pi_{n+1}\subset \mathbb R^{n+1}$ is the convex hull of the points $(\sigma(1),\ldots, \sigma(n+1))\in \mathbb R^{n+1}$ as $\sigma \in \mathfrak S_{n+1}$ varies among the permutations of $n+1$ elements. Let $T=(\mathbb C^{\times})^{n+1}$ be the algebraic torus with the character lattice given by $\mathbb Z^{n+1}\subset \mathbb R^{n+1}$. The $n$-dimensional {\em permutohedral variety} $X_n$ is the projective toric variety associated to the polytope $\Pi_{n+1}$. 
We note that there are two tori acting on $X_{n} $, one of which is the torus $T$. For any $\lambda\neq 0$ the element $(\lambda,\dots,\lambda)\in T$ stabilizes every point $x\in X_n$. Thus the second torus is $T':=T/(\lambda,\dots,\lambda)_{\lambda\in\mathbb C^\times}$ having character lattice $M_{T'}:=\{(m_1,\dots,m_{n+1})\in \mathbb Z^{n+1} \ | \ \sum m_i=0\}$.

 The rays in the fan $\Sigma_{X_n}$ of $X_n$ are parametrized by the non-empty proper subsets of $[n+1]:=\{1, \ldots, n+1\}$. Given $(e_1,\ldots, e_{n+1})$ the standard basis of $\mathbb R^{n+1}$, they are the following $2^{n+1}-2$ vectors in the lattice $M^*=\mathbb Z^{n+1}/\langle \mathbbm{1} \rangle$ of one-parameter subgroups of $T'$, where $\mathbbm{1}:=(1,\ldots, 1)$:
\begin{equation}\label{rays}
\forall S \subsetneq [n+1], S\neq \emptyset , \ \ \ u_S:=\sum_{i\in S}e_i \ \ (mod \  \mathbbm{1}) \ .
\end{equation}

\noindent Notice that the following relations hold {\em modulo} $\mathbbm{1}$:
\begin{equation}\label{rays relations}
u_S= -u_{[n+1]\setminus S}, \ \ \ \sum_{i \in S, j \notin S} u_S= -\sum_{i \notin S, j \in S} u_S \ \ \ \forall i,j\in[n+1] \ . 
\end{equation}
Considering the natural action of $T$ on the projective space $\PP^n$, the variety $X_n$ can also be defined as the consecutive blow-up of $\mathbb P^n$ at all the $T$-invariant linear subspaces, starting from points, through lines and ending with codimension two coordinate subspaces. This blow-up construction resolves the classical birational Cremona map:
\begin{center}
	\begin{tikzpicture}[scale=2.5]
	\node(X) at (0,0.6){$X_n$};
	\node(PP) at (-0.6,0){$\mathbb P^{n}$};
	\node(PPvee) at (0.6,0){$(\mathbb P^{n})^\vee$};
	\node(pp) at (-0.6,-0.25){$[x_0: \ldots : x_n]$};
	\node(pp2) at (0.6,-0.25){$[x_0^{-1}: \ldots : x_{n}^{-1}]$};	
	
	\path[font=\scriptsize,>= angle 90]
	(PP) edge [dashed,->] node [above] {$Crem$} (PPvee)
	(X) edge [->] node [left] {$p$} (PP)
	(pp) edge [|->] node [] {} (pp2)
	(X) edge [->] node [right] {$q$} (PPvee);	
	\end{tikzpicture}
\end{center}
\paragraph{Divisors on the permutohedral variety.} As for any toric variety, the group of $T$-invariant divisors $\Div^T(X_n)$ is generated by the so-called {\em prime} $T$-invariant divisors corresponding to the rays of $\Sigma_{X_n}$. For every non-empty proper subset $S\subsetneq [n+1]$, let $D_S\in \Div^T(X_n)$ be the  prime $T$-invariant divisor corresponding to the ray $u_S$ so that any $T$-invariant divisor $D \in \Div^T(X_n)$ is of the form $D= \sum_{\emptyset\neq S\subsetneq [n+1]} a_SD_S$ for certain $a_S \in \mathbb Z$. For simplicity, from now on we always write $S\subset [n+1]$ meaning only non-empty proper subsets.\\
Let $H_i\subset \PP^n$ and $\hat H_j\subset (\mathbb P^n)^\vee$ be the two coordinate hyperplanes corresponding to the $i$-th and the $j$-th coordinates, respectively. Then, one has the expression of the pullback of the hyperplanes under the respective projection map (see \cite[Subsection 3.1]{huh}):
\begin{equation}\label{Gamma_i, Delta_j}
\Gamma_i := p^\ast (H_i) = \sum_{i \in S}D_S, \ \ \ \Delta_j := q^\ast (\hat H_j) = \sum_{j\notin S}D_S
\end{equation}
and the canonical divisor on $X_n$ is 
\begin{equation}\label{canonical divisor} 
K:= K_{X_n} \ = \ - \sum_{S\subset [n+1]} D_S \ = \  - \Gamma_i - \Delta_i \ .\end{equation}

For our purpose of proving the vanishing of the intermediate cohomology groups, we show the following result.
\begin{lemma}\label{lem:bigandnef}
For each $k,l\geq 0$, not both equal to zero, the divisor $k\Gamma_i + \ell \Delta_j\in \Div^T(X_n)$ is nef and big. In particular, the anticanonical divisor $-K$ is big and nef. 
\end{lemma}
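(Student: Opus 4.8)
The plan is to exploit the description of $\Gamma_i$ and $\Delta_j$ as pullbacks of hyperplanes under the two birational projections $p\colon X_n\to\PP^n$ and $q\colon X_n\to(\PP^n)^\vee$, combined with two elementary positivity facts: a non-negative combination of nef divisors is nef, and a nef divisor on an $n$-dimensional projective variety is big precisely when its top self-intersection is strictly positive. First I would note that $H_i$ and $\hat H_j$ are ample, hence nef, and that nef-ness is preserved under pullback by any morphism of projective varieties (since $(f^\ast D\cdot C)=(D\cdot f_\ast C)\geq 0$ for every curve $C$). Thus $\Gamma_i=p^\ast H_i$ and $\Delta_j=q^\ast\hat H_j$ are nef, and for $k,\ell\geq 0$ the class $k\Gamma_i+\ell\Delta_j$ is a non-negative integral combination of nef divisors, hence nef. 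This disposes of the nef part for all $(k,\ell)$ at once.

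For bigness I would first treat the pure classes. Because $p$ and $q$ are birational — being the two iterated blow-down morphisms that resolve the Cremona map — the projection formula yields $\Gamma_i^{\,n}=(p^\ast H_i)^n=H_i^{\,n}=1$ and likewise $\Delta_j^{\,n}=1$, so each of $\Gamma_i$ and $\Delta_j$ is nef with positive top self-intersection, hence big. To pass to the general combination, I would expand
\begin{equation*}
(k\Gamma_i+\ell\Delta_j)^n=\sum_{a=0}^{n}\binom{n}{a}\,k^a\ell^{\,n-a}\,\Gamma_i^{\,a}\,\Delta_j^{\,n-a}.
\end{equation*}
Every mixed number $\Gamma_i^{\,a}\Delta_j^{\,n-a}$ is an intersection of nef divisors and is therefore non-negative, while the hypothesis $(k,\ell)\neq(0,0)$ guarantees that a pure term is strictly positive: $k^n\Gamma_i^{\,n}=k^n$ when $k\geq 1$, or $\ell^n\Delta_j^{\,n}=\ell^n$ when $\ell\geq 1$. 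Hence $(k\Gamma_i+\ell\Delta_j)^n>0$ and the divisor, being nef, is big. The anticanonical assertion is then immediate from \eqref{canonical divisor}, since $-K=\Gamma_i+\Delta_i$ is the case $k=\ell=1$.

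The only steps requiring genuine care are that $p$ and $q$ are honest birational morphisms, so that bigness descends through the projection formula, and the non-negativity of the mixed intersection numbers, which is exactly where the nef-ness of \emph{both} $\Gamma_i$ and $\Delta_j$ enters; positivity of a single pure term then forces the entire self-intersection to be positive. I expect bigness of the sum, rather than nef-ness, to be the main obstacle. As a purely toric alternative one could observe that the lattice polytope of $\Gamma_i$ is the full-dimensional simplex attached to the hyperplane class on $\PP^n$, so that $k\Gamma_i+\ell\Delta_j$ corresponds to a Minkowski sum of dilated simplices which is full-dimensional whenever $(k,\ell)\neq(0,0)$, yielding bigness, with nef-ness following from convexity of the associated support function.
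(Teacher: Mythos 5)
Your proof is correct, but your route to bigness is genuinely different from the paper's. The paper's own proof is essentially a one-line citation: since $\Gamma_i=p^\ast H_i$ and $\Delta_j=q^\ast \hat H_j$ are pullbacks of very ample divisors under generically finite (here birational) morphisms, they are big and nef by \cite[Subsection 2.2.A]{lazarsfeld}, and then $k\Gamma_i+\ell\Delta_j$ with $k,\ell\geq 0$ not both zero is big and nef, implicitly using that non-negative combinations of nef divisors are nef and that the sum of a big-and-nef divisor with a nef divisor is again big. You instead prove nefness by the projection formula and bigness by the numerical criterion that a nef divisor $D$ on an $n$-dimensional projective variety is big if and only if $D^n>0$ (this is \cite[Theorem 2.2.16]{lazarsfeld}, worth citing explicitly), verifying positivity by expanding $(k\Gamma_i+\ell\Delta_j)^n$, invoking non-negativity of intersection numbers of nef divisors for the mixed terms, and computing the pure term $k^n\Gamma_i^{\,n}=k^n$ (or $\ell^n\Delta_j^{\,n}=\ell^n$) via birationality of $p$ and $q$. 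Both arguments are complete; the paper's is shorter and defers all positivity theory to a single reference, while yours is more explicit and self-contained, and it meshes nicely with the intersection numbers at the heart of the paper: since $\Gamma_i^{\,a}\Delta_j^{\,n-a}=\binom{n}{a}$, your expansion in fact evaluates to
\begin{equation*}
(k\Gamma_i+\ell\Delta_j)^n=\sum_{a=0}^{n}\binom{n}{a}^{2}k^a\ell^{\,n-a}>0 ,
\end{equation*}
which quantifies the bigness rather than merely asserting it. Your closing toric alternative (full-dimensionality of the Minkowski sum of the associated polytopes) is also a valid third route, and is closest in spirit to the lattice-polytope computations the paper carries out later in Section \ref{sec:firstap}.
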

\begin{proof}
  As the pullback of a very ample divisor under a generically finite map is big and nef (see \cite[Subsection 2.2.A]{lazarsfeld}), the divisors $\Gamma_i$ and $\Delta_j$ are big and nef. In particular, for any $k,l\geq 0$ (not both zero) the divisor $k\Gamma_i + \ell \Delta_j\in \Div^T(X_n)$ is big and nef. Thus, the anticanonical divisor $-K=\Gamma_i + \Delta_i$ is big and nef.  
\end{proof}

\subsection{Equivariant localization formula}

\indent \indent There is a bijection among permutations $\sigma \in \mathfrak S_{n+1}$, maximal cones $C_\sigma \in \Sigma_{X_n}(n)$ in the fan $\Sigma_{X_n}$ and torus-fixed points $P_\sigma \in X^T_n$ given by
\begin{equation}\label{bij S_n X^T}
	\begin{matrix}
		\mathfrak S_{n+1} & \longleftrightarrow & \Sigma_X(n) & \longleftrightarrow & X_n^T\\
		\sigma & \leftrightarrow & C_\sigma \ = \ Cone\!\left(u_{S_1}, \ldots, u_{S_n}\right) & \leftrightarrow & P_\sigma
	\end{matrix}
	\end{equation}
where $S_i=\{\sigma(1), \ldots, \sigma(i)\}$ for any $1\leq i \leq n$. For any $\sigma \in \mathfrak S_{n+1}$, there is an action of the torus $T$ on the affine chart $U_\sigma \ni P_\sigma$, thus an action of $T$ on the tangent space $T_{P_\sigma}(X_n)$. To simplify the notation, we write $T_\sigma(X_n)$ instead of $T_{P_\sigma}(X_n)$. The following result is well-known to experts.

\begin{proposition}
	For any $\sigma \in \mathfrak S_{n+1}$, taking into account the torus action, the tangent space to $X_n$ at $P_\sigma$ is
	\begin{equation}\label{tg generators}
		T_\sigma (X_n) = \left\langle t_{\sigma(1)}t_{\sigma(2)}^{-1}, \ldots, t_{\sigma(n)}t_{\sigma(n+1)}^{-1}\right\rangle_\mathbb C \ .
	\end{equation}
	\end{proposition}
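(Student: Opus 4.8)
The plan is to reduce the statement to the standard local description of a smooth toric variety at a torus-fixed point and then carry out an explicit dual-basis computation in the cocharacter lattice $M^* = \mathbb{Z}^{n+1}/\langle\mathbbm{1}\rangle$. The two geometric inputs are that the chart $U_\sigma$ around $P_\sigma$ is an affine space, and that the weights of the torus action on its tangent space at the origin are read off from the cone $C_\sigma$; everything else is integral linear algebra.

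First I would verify that the maximal cone $C_\sigma = \mathrm{Cone}(u_{S_1}, \ldots, u_{S_n})$ is smooth. Its generators are $u_{S_i} = [e_{\sigma(1)} + \cdots + e_{\sigma(i)}]$ in $M^*$, so forming the successive differences $u_{S_i} - u_{S_{i-1}} = [e_{\sigma(i)}]$ (with $u_{S_1} = [e_{\sigma(1)}]$) shows that $u_{S_1}, \ldots, u_{S_n}$ generate the same sublattice as $[e_{\sigma(1)}], \ldots, [e_{\sigma(n)}]$. Since $[e_{\sigma(n+1)}] = -\sum_{i=1}^{n}[e_{\sigma(i)}]$ modulo $\mathbbm{1}$, these $n$ classes span the rank-$n$ lattice $M^*$ and hence form a $\mathbb{Z}$-basis. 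Consequently $C_\sigma$ is unimodular, the chart $U_\sigma$ is $T$-equivariantly isomorphic to $\mathbb{A}^n$ with $P_\sigma$ sitting at the origin, and $T_\sigma(X_n) = T_{P_\sigma}\mathbb{A}^n$.

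Next I would invoke the standard fact (see \cite{cox}) that for such a smooth cone the weights of the torus representation on $T_{P_\sigma}\mathbb{A}^n$ are exactly the dual basis $u_{S_1}^{\ast}, \ldots, u_{S_n}^{\ast} \in M_{T'}$ of the ray generators, read as characters of $T'$, the sign being the one already fixed in the projective-line computation of the previous subsection. It then remains to identify this dual basis. Using the perfect pairing $M_{T'}\times M^* \to \mathbb{Z}$, $\langle m,[v]\rangle = \sum_i m_i v_i$ — well defined because $\sum_i m_i = 0$ for $m\in M_{T'}$ — I claim $u_{S_j}^{\ast} = e_{\sigma(j)} - e_{\sigma(j+1)}$, i.e. the character $t_{\sigma(j)}t_{\sigma(j+1)}^{-1}$. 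Indeed $\langle e_{\sigma(j)} - e_{\sigma(j+1)}, u_{S_i}\rangle = \sum_{k=1}^{i}(\delta_{kj} - \delta_{k,j+1}) = \delta_{ij}$, so these are precisely the dual basis vectors, and each lies in $M_{T'}$ since its coordinates sum to zero. Substituting yields $T_\sigma(X_n) = \langle t_{\sigma(1)}t_{\sigma(2)}^{-1}, \ldots, t_{\sigma(n)}t_{\sigma(n+1)}^{-1}\rangle_{\mathbb{C}}$, as asserted.

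I expect the only delicate point to be the bookkeeping of the torus-action convention: whether the tangent weights are the dual basis or its negative depends on the normalization of the $T$-action, so I would pin this down by matching against the explicit $\mathbb{P}^1 = X_1$ case treated earlier (where $\sigma = \mathrm{id}$ gives the single weight $t_1 t_2^{-1}$ and its transposition gives $t_2 t_1^{-1}$), after which the identity above follows with the stated signs. The remaining steps are routine integral linear algebra in the quotient lattice $M^*$.
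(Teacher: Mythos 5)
Your proposal is correct, and there is in fact nothing in the paper to compare it with: the proposition is stated there as ``well-known to experts'' with no proof given, so your argument genuinely fills a gap rather than paralleling an existing one. The two computations you give are right and constitute the standard toric argument: the telescoping differences $u_{S_i}-u_{S_{i-1}}=[e_{\sigma(i)}]$ show that the ray generators of $C_\sigma$ form a $\mathbb Z$-basis of $M^*=\mathbb Z^{n+1}/\langle\mathbbm{1}\rangle$, so $U_\sigma\cong\mathbb A^n$ with $P_\sigma$ at the origin, and the pairing identity $\langle e_{\sigma(j)}-e_{\sigma(j+1)},u_{S_i}\rangle=\delta_{ij}$ correctly identifies the dual basis, hence the characters $t_{\sigma(j)}t_{\sigma(j+1)}^{-1}$. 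The one point you defer --- whether the tangent weights are the dual basis or its negative --- is indeed the only delicate issue, but note that your plan of matching against the $\PP^1$ subsection needs one further identification that you do not spell out: the paper's example attaches weights to the points $[1:0]$ and $[0:1]$, not to permutations, so you must also determine which of these is $P_{\mathrm{id}}$. This is a short chart computation: $C_{\mathrm{id}}=\mathrm{Cone}(u_{\{1\}})$ has dual cone generated by $e_1-e_2$, so its affine chart is $\mathrm{Spec}\,\CC[x_1/x_2]=\{x_2\neq 0\}$ (using the action $t\cdot[x_1:x_2]=[t_1x_1:t_2x_2]$ implicit in the paper's convention that global sections of $\cO(1)$ carry the dual action), whence $P_{\mathrm{id}}=[0:1]$; the paper assigns to this point the cotangent character $t_2/t_1$, i.e.\ tangent character $t_1t_2^{-1}$, which confirms that the tangent weights are the dual basis with the positive sign, exactly as your formula requires, and which is also what makes the denominators $\prod_{s=1}^{n}\left(1-t_{\sigma(s)}^{-1}t_{\sigma(s+1)}\right)$ in the localization formula of Theorem \ref{formula} come out as stated. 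With that identification made explicit, your proof is complete.
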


Let $K^0_T(X_n)$ be the $T$-equivariant Grothendieck ring of (the equivalent classes of) vector bundles on $X_n$, and let $K^0_T(X_n^T)$ be the one for $X_n^T$. For any $[\cal E]$ in $K^0_T(X_n)$, one denotes its dual class by $[\cal E^\vee]$. It is known that for a point $P$ with the trivial $T$-action, there exists an identification
\[ K^0_T(P) \sim \mathbb Z[t_1^{\pm 1}, \ldots , t_{n+1}^{\pm 1}],\]
associating to any class of vector bundle on $P$ (i.e.~a $T$-representation) the sum of the characters giving its decomposition into irreducible $T$-representations. In particular, it easily follows that we may identify
\[ K^0_T(X_n^T) \sim \prod_{\sigma \in \mathfrak S_{n+1}} \mathbb Z[t_1^{\pm 1}, \ldots, t_{n+1}^{\pm 1}] \ .\]
For any $f \in \prod_{\sigma \in \mathfrak S_{n+1}} \mathbb Z[t_1^{\pm 1}, \ldots, t_{n+1}^{\pm 1}]$ we denote by $f_\sigma$ its projection onto the $\sigma$-th factor.

\begin{theorem}[\cite{tautologicalclasses}, Theorem 2.1]
	The restriction map $K^0_T(X_n)\rightarrow K^0_T(X_n^T)$ is injective and its image is identified with the subring
	{\footnotesize \[ \left\{ f \in \prod_{\sigma \in \mathfrak S_{n+1}} \mathbb Z[t_1^{\pm 1}, \ldots, t_{n+1}^{\pm 1}] \ \bigg| \ f_\sigma \equiv f_{\tau} \left(\text{mod} \ \big(1-\frac{t_{\sigma(i+1)}}{t_{\sigma(i)}}\big)\right) \ , \forall \tau=\sigma \circ (i,i+1) \right\} \ .\]}
	\end{theorem}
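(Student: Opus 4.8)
The plan is to read this statement as the equivariant $K$-theoretic GKM (Goresky--Kottwitz--MacPherson) theorem for the smooth complete toric variety $X_n$, and to prove it in two halves: injectivity of the restriction, and the identification of its image with the congruence subring. For injectivity I would invoke the localization theorem in equivariant $K$-theory: since $X_n$ is smooth, complete, and has only the isolated fixed points $P_\sigma$, the restriction $K^0_T(X_n)\to K^0_T(X_n^T)$ becomes an isomorphism after tensoring with the fraction field of $R(T)=K^0_T(P)\cong\mathbb Z[t_1^{\pm1},\ldots,t_{n+1}^{\pm1}]$. To upgrade this to injectivity of the integral map I would use that $K^0_T(X_n)$ is a \emph{free} $R(T)$-module, which follows from a Bialynicki--Birula decomposition: a generic one-parameter subgroup stratifies $X_n$ into affine cells indexed by the fixed points $P_\sigma$, and the associated classes give an $R(T)$-basis. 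Freeness forbids $R(T)$-torsion, so no nonzero class can die after localization, and injectivity follows.

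For the inclusion of the image into the stated subring I would restrict to invariant curves. Two fixed points $P_\sigma,P_\tau$ lie on a common $T$-invariant curve $C\cong\mathbb P^1$ exactly when the maximal cones $C_\sigma$ and $C_\tau$ share a wall; tracking the sets $S_i=\{\sigma(1),\ldots,\sigma(i)\}$ shows that this happens precisely for $\tau=\sigma\circ(i,i+1)$, the two cones then differing only in the ray $u_{S_i}$. By the tangent space description \eqref{tg generators}, the weight of the $T$-action on $C$ at $P_\sigma$ is $t_{\sigma(i)}/t_{\sigma(i+1)}$. Restricting any class first to $C$ and then to $C^T=\{P_\sigma,P_\tau\}$, the elementary computation in $K^0_T(\mathbb P^1)$ shows that the two values differ by a multiple of $1-t_{\sigma(i+1)}/t_{\sigma(i)}$, which is exactly the asserted congruence modulo $\big(1-t_{\sigma(i+1)}/t_{\sigma(i)}\big)$.

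The main obstacle is the reverse inclusion: that every tuple satisfying the GKM congruences actually arises from a genuine equivariant class. Localization only yields this after inverting denominators, so integrality must be argued separately. I would resolve it using the Bialynicki--Birula filtration from the first step: ordering the fixed points by the partial order in which the cells are attached, I would show that the restriction matrix of the cell-closure classes $[\OO_{\overline{Z_\sigma}}]$ is \emph{triangular}, vanishing at $P_\tau$ unless $\tau\geq\sigma$ and equaling a product of factors $1-t^{\mu}$ (over the normal weights at $P_\sigma$) on the diagonal. A triangular $R(T)$-basis of the source then maps to a triangular generating family of the congruence subring, forcing surjectivity onto it. Verifying the precise vanishing-and-product pattern of these restrictions is the delicate, computation-heavy heart of the argument; granting it, the identification of the image is formal, and the remaining compatibility with $\tau=\sigma\circ(i,i+1)$ is automatic since adjacent transpositions generate $\mathfrak S_{n+1}$.
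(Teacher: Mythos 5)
First, a point of comparison that matters: the paper contains \emph{no} proof of this statement at all --- it is quoted verbatim, with citation, as Theorem 2.1 of \cite{tautologicalclasses}, and is never reproved. So your proposal has to be judged on its own merits rather than against an internal argument. Its architecture is the standard GKM argument in equivariant $K$-theory, and two of its three parts are essentially right. Injectivity does follow from Thomason localization once one knows that $K^0_T(X_n)$ is a torsion-free (indeed free) $R(T)$-module, which the Bialynicki--Birula filtration provides, since the kernel of restriction to $X_n^T$ is annihilated by products of elements $1-\mathbf{t}^\mu$. The containment of the image in the congruence subring follows, as you say, by restricting to $T$-invariant curves: in the permutohedral fan these join $P_\sigma$ to $P_{\sigma\circ(i,i+1)}$ across the wall obtained by deleting $u_{S_i}$, with tangent weight $t_{\sigma(i)}/t_{\sigma(i+1)}$ at $P_\sigma$ by \eqref{tg generators}; since that weight is a unit, $1-\chi$ and $1-\chi^{-1}$ generate the same ideal, so your congruence matches the one in the statement. (One sentence you should add: the big torus $T$ has the diagonal $\mathbb{C}^\times$ acting trivially, so the action is not effective; this is harmless because all weights lie in $M_{T'}$ and $R(T)\cong R(T')[u^{\pm 1}]$, but the GKM machinery should be quoted for $T'$ and then base-changed.)

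The genuine gap is in the surjectivity step. Triangularity of the restriction matrix of the classes $[\cO_{\overline{Z_\sigma}}]$ does not by itself ``force surjectivity onto'' the congruence subring: it only exhibits the image as the $R(T)$-span of a triangular family, and a priori that span could sit strictly inside the GKM ring. Indeed, the same triangular family is triangular whether one imposes all the congruences or only some of them, and those subrings differ, so triangularity alone cannot identify the span with any particular one of them. The missing mechanism is the induction that actually consumes the congruences: order the fixed points by the BB order, suppose the tuple $f$ in the congruence ring vanishes at all fixed points below $\sigma$, and prove that $f_\sigma$ is divisible by the diagonal entry $\prod_\mu (1-\mathbf{t}^\mu)$, so that an $R(T)$-multiple of $[\cO_{\overline{Z_\sigma}}]$ can be subtracted without disturbing lower fixed points. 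That divisibility needs two inputs absent from your sketch: (i) every normal weight $\mu$ of the cell $Z_\sigma$ at $P_\sigma$ is the tangent weight of an invariant curve joining $P_\sigma$ to a fixed point strictly lower in the order, so the congruence along that curve together with the vanishing hypothesis gives $(1-\mathbf{t}^\mu)\mid f_\sigma$ for each such $\mu$ separately; and (ii) these factors are pairwise coprime in $\mathbb{Z}[t_1^{\pm 1},\ldots,t_{n+1}^{\pm 1}]$, so the separate divisibilities combine into divisibility by the product --- true here because the weights at a fixed point of a smooth toric variety form a basis of the character lattice of $T'$, hence are pairwise non-proportional. With (i) and (ii) stated and proved (both are short for $X_n$), your outline closes up into a complete proof; without them, the phrase ``forcing surjectivity'' is precisely where the argument fails.
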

The above result allows us to denote by $[\cal E]_\sigma\in \mathbb Z[t_1^{\pm 1},\ldots, t_{n+1}^{\pm 1}]$ the restriction of $[\cal E]\in K^0_T(X_n)$ to the $T$-fixed point $P_\sigma \in X_n^T$. Given $m=(m_1,\ldots,m_{n+1})\in \mathbb Z^{n+1}$, we write $\mathbf{t}^m=t_1^{m_1}\cdots t_{n+1}^{m_{n+1}}$. Then, for every $[\cal E]\in K^0_T(X_n)$ and for every $\sigma \in \mathfrak S_{n+1}$ it holds
\begin{equation}\label{fiber of vector bundle as characters} 
	[\cal E]_\sigma= \sum_{l=0}^{k_\sigma}a_{\sigma,l}\mathbf{t}^{m_{\sigma ,l}}, \qquad [\cal E^\vee]_\sigma= \sum_{l=0}^{k_\sigma}a_{\sigma, l}\mathbf{t}^{-m_{\sigma, l}}
	\end{equation}
for certain $k_\sigma \geq 0$, $a_{\sigma, l}\in \{\pm 1\}$ and $m_{\sigma,l}\in \mathbb Z^{n+1}$ so that $\rk(\cal E)=a_{\sigma,1}+\ldots + a_{\sigma, l}$ (see \cite[Subsection 2.5]{tautologicalclasses} for details). \\

The Cremona map on $\mathbb P^n$ lifts to an involution map on $X_n$, which we also denote by $crem$, commuting the following diagram:
\begin{equation}\label{cremona diagram}
	\begin{tikzpicture}[scale=2.5]
	\node(a) at (-0.5,0.3){$X_n$};
	\node(b) at (0.5,0.3){$X_n$};
	\node(c) at (-0.5,-0.3){$\mathbb P^n$};
	\node(d) at (0.5,-0.3){$\mathbb P^n$};
	
	\path[font=\scriptsize,>= angle 90]
	(a) edge [->] node [above] {$crem$} (b)
	(a) edge [->] node [left] {$p$} (c)
	(c) edge [->,dashed] node [below] {$crem$} (d)
	(b) edge [->] node [right] {$p$} (d)
	(a) edge [->] node [above] {$q$} (d);
	\end{tikzpicture}
	\end{equation}
The map $crem: X_n\rightarrow X_n$ is not $T$-invariant, but it is a toric morphism, thus it preserves $X_n^T$. More precisely, for any $\sigma \in \mathfrak S_{n+1}$ it holds $crem(P_\sigma)=P_{\overline\sigma}$, where $\overline \sigma(i):= \sigma(n+1-i)$ for all $1 \leq i\leq n+1$. Moreover, the Cremona map induces an involution $crem: K^0_T(X_n)\rightarrow K^0_T(X_n)$ under which one has:
\begin{equation}\label{fiber via crem} 
	[\cal E]_\sigma=\sum_{l=1}^{k_\sigma}a_{\sigma,l}\mathbf{t}^{m_{\sigma , l}} \ \ \implies \ \ \left(crem[\cal E]\right)_\sigma =  \sum_{l=1}^{k_{\overline\sigma}}a_{\overline \sigma, l}\mathbf{t}^{-m_{\overline \sigma, l}} \ .
	\end{equation}

The following result is known as the {\em equivariant localization formula}.
\begin{theorem}[\cite{localization}, 4.7]\label{formula}
 For any $[\cal F]\in K^0_T(X_n)$ it holds
 \begin{equation}\label{localization formula}
  \chi_{_T}([\cal F])= \sum_{\sigma\in \mathfrak S_{n+1}} \dfrac{[\cal F]_\sigma}{\prod_{s=1}^{n}\left(1-t_{\sigma(s)}^{-1}t_{\sigma(s+1)}\right)} \ .
 \end{equation}   
\end{theorem}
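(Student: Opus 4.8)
The plan is to realize $\chi_{_T}$ as the equivariant pushforward along the structure map $\pi\colon X_n\to \mathrm{pt}$ and then apply the K-theoretic concentration theorem to localize the computation at the fixed points $P_\sigma$. Concretely, $\chi_{_T}([\mathcal{F}])=\pi_*([\mathcal{F}])$ is the alternating sum $\sum_i(-1)^i[H^i(X_n,\mathcal{F})]$ viewed as an element of $K^0_T(\mathrm{pt})=R(T)=\mathbb{Z}[t_1^{\pm 1},\dots,t_{n+1}^{\pm 1}]$. By Thomason's localization theorem, after inverting the multiplicative set generated by the classes $1-\mathbf{t}^m$ (equivalently, tensoring with the fraction field of $R(T)$), the restriction map $i^*\colon K^0_T(X_n)\to K^0_T(X_n^T)=\prod_\sigma R(T)$ becomes an isomorphism; since $X_n$ is smooth and projective with the \emph{isolated} fixed points $P_\sigma$ classified in \eqref{bij S_n X^T}, the inverse is assembled from the fixed-point inclusions $i_\sigma\colon P_\sigma\hookrightarrow X_n$.

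The key algebraic ingredient is the self-intersection (excess) formula in equivariant K-theory: for the regular embedding $i_\sigma$ of the point $P_\sigma$, whose normal bundle $N_\sigma$ is the tangent representation $T_\sigma(X_n)$, one has
\[
i_\sigma^*\, i_{\sigma *}(\alpha)=\lambda_{-1}(N_\sigma^\vee)\cdot \alpha,\qquad \lambda_{-1}(V):=\sum_{i\geq 0}(-1)^i[\Lambda^i V],
\]
which follows from the Koszul resolution of $\mathcal{O}_{P_\sigma}$. Because $\lambda_{-1}(N_\sigma^\vee)$ becomes invertible after localization, this lets me write the identity
\[
[\mathcal{F}]=\sum_{\sigma\in\mathfrak{S}_{n+1}} i_{\sigma*}\!\left(\frac{[\mathcal{F}]_\sigma}{\lambda_{-1}(N_\sigma^\vee)}\right)
\]
in the localized ring. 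Applying $\pi_*$ and using that $\pi\circ i_\sigma$ is an isomorphism of $P_\sigma$ onto $\mathrm{pt}$ (so that $\pi_*\circ i_{\sigma*}=\mathrm{id}$ on $R(T)$) yields $\chi_{_T}([\mathcal{F}])=\sum_\sigma [\mathcal{F}]_\sigma/\lambda_{-1}(N_\sigma^\vee)$.

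It then remains to identify the denominator. By the proposition recording \eqref{tg generators}, the tangent representation at $P_\sigma$ has characters $t_{\sigma(s)}t_{\sigma(s+1)}^{-1}$ for $s=1,\dots,n$, so the conormal space $N_\sigma^\vee=T_\sigma(X_n)^\vee$ has characters $t_{\sigma(s)}^{-1}t_{\sigma(s+1)}$. Since $\lambda_{-1}$ is multiplicative on direct sums and equals $1-\mathbf{t}^{m}$ on the one-dimensional representation of weight $m$, I obtain
\[
\lambda_{-1}(N_\sigma^\vee)=\prod_{s=1}^{n}\left(1-t_{\sigma(s)}^{-1}t_{\sigma(s+1)}\right),
\]
which is exactly the claimed denominator in \eqref{localization formula}.

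The main obstacle is not the formal bookkeeping above but justifying the passage to the localized ring and the descent back: each summand $[\mathcal{F}]_\sigma/\prod_s(1-t_{\sigma(s)}^{-1}t_{\sigma(s+1)})$ is only a \emph{rational} function of the $t_i$, yet the total sum must land in $R(T)$ as a genuine virtual character. This relies on the concentration theorem (the injectivity of $i^*$ and the explicit description of its image recalled just before the statement guarantee that no information is lost), together with the fact that the apparent poles along the loci $t_{\sigma(i)}=t_{\sigma(i+1)}$ cancel precisely because of the compatibility congruences satisfied by the $[\mathcal{F}]_\sigma$ under the adjacent transpositions $\tau=\sigma\circ(i,i+1)$. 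Making this cancellation explicit — rather than merely invoking the abstract isomorphism — is the step that requires the most care.
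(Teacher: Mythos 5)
Your argument is correct, but there is no internal proof to compare it against: the paper does not prove Theorem \ref{formula} at all --- it imports the statement from Nielsen \cite{localization} and uses it as a black box. What you have written is the standard proof of the $K$-theoretic fixed-point formula: Thomason-style concentration (the restriction to the fixed locus becomes an isomorphism after inverting the elements $1-\mathbf{t}^m$ of $R(T)$), the Koszul/self-intersection identity $i_\sigma^* i_{\sigma*}(\alpha)=\lambda_{-1}(N_\sigma^\vee)\cdot\alpha$ at the isolated fixed points $P_\sigma$, pushforward along $\pi$, and the identification of $\lambda_{-1}(N_\sigma^\vee)$ with $\prod_{s=1}^n\bigl(1-t_{\sigma(s)}^{-1}t_{\sigma(s+1)}\bigr)$ via the tangent weights \eqref{tg generators}. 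All steps are sound: smoothness of $X_n$ lets you conflate $K$-theory of vector bundles with $G$-theory so that $i_{\sigma*}$ is defined, and the fixed points of the smooth projective toric variety $X_n$ are indeed isolated and reduced, one per maximal cone, as in \eqref{bij S_n X^T}. So your proposal supplies a self-contained derivation of a result the paper only cites.

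Two small corrections. First, inverting the multiplicative set generated by the $1-\mathbf{t}^m$ is not the same operation as tensoring with the fraction field of $R(T)$, although either suffices here because the Euler classes $\lambda_{-1}(N_\sigma^\vee)$ are invertible in both. Second, your closing paragraph overstates the remaining difficulty: once the identity is established in the localized ring, no explicit pole-cancellation argument is needed. The left-hand side $\chi_{_T}([\mathcal{F}])$ is an honest element of $R(T)=\mathbb{Z}[t_1^{\pm 1},\dots,t_{n+1}^{\pm 1}]$, which is an integral domain, so $R(T)$ injects into its localization and the identity of localized elements already determines this Laurent polynomial; the congruences satisfied by the tuples $([\mathcal{F}]_\sigma)_\sigma$ (the image description quoted from \cite{tautologicalclasses}) explain \emph{why} the apparent poles cancel, but they are not logically required to complete the proof.
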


\subsection{Example of projective line}\label{subsec:example P1}

\indent \indent In this section, we present our approach in the simplest case when $n=1$ and $X_1=\PP^1$. In general, we will not be working over a projective space, thus
the computations will be much more involved. However, we hope that the example below can give the flavor of our results. 
In particular, our aim here is to highlight the differences among:
\begin{itemize}
\item equivariant and non-equivariant case,
\item the big $T:=(\CC^\times)^2$ and small $T':=(\CC^\times)^2/(\lambda,\lambda)$ tori actions,
\item different methods of computation.
\end{itemize}

\indent Let us start with the non-equivariant case. Let $H$ be a point in $\PP^1$ and let $\cO(1)$ be the associated line bundle. As the Cremona map is a linear
isomorphism, $H$ plays the role of both divisors: $\Gamma$ and $\Delta$. By abuse of notation, let $H$ be also the corresponding class in the Chow ring $A(\PP^1)$.
Clearly, we have $H=1 pt$ and $H^a=0$ for $a>1$. The Hirzebruch-Rieman-Roch formula immediately implies:
\begin{align}\label{eq:noneq}
\chi([\cO]-[\cO(-1)])=1,\quad \chi(([\cO]-[\cO(-1)])^{\otimes a})=0, \ \ \ \forall a>0.\end{align}
Here the situation is exceptionally nice. By the Chern character isomorphism, $[\cO]-[\cO(-1)]$ is mapped exactly to $H$. For larger $n$, we also obtain higher-order
terms that are not visible in such a small example. We note that the above formulas may also be computed directly, as all cohomology groups of line bundles
on projective spaces are known \cite[Theorem III.5.1]{Hartshorne}.\\

We now turn to the equivariant case. Both $T$ and $T'$ act on $X_1=\PP^1$. From the point of view of toric geometry, it is more natural to choose the action of $T'$.
However, once we consider the line bundle $\cO(-1)$ the only natural action is that of $T$. Here, we have two choices: we either have to
work with $T'$ and choose a linearization of the line bundle or we work with $T$, but we remember that our group has a stabilizer. In the article, in general,
we cover both of those approaches. \\

Let us start with $T'$. We have two torus invariant divisors $H_1:=[1:0]$ and $H_2:=[0:1]$. These two correspond to choosing the action of $T'$ on $\cO(1)$ or 
equivalently on $\cO(-1)$. In fact, there are more choices, as we could represent $H=2H_2-H_1$, however, we focus on those two. Hence, we fix $H_i$, where
$i=1$ or $i=2$. We may now compute all cohomology groups as a representation of $T'$. That is, we identify each cohomology group, and hence Euler characteristics, 
with the representation of $T'$ and consequently with the character, that is a Laurent polynomial. 
This approach is carried out in general in Section \ref{sec:firstap}. First, we obtain:
\[\chi_{_{T'}}\big([\cO]-[\cO(-H_i)]\big)=\chi_{_{T'}}([\cO])=1.\]
This looks similar to the formula \eqref{eq:noneq}, however now the value $1$ on the right-hand side is a polynomial in the variables $t_1, t_2$ that evaluated at the point $(1,1)$ gives $1$ from 
\eqref{eq:noneq}. A more interesting case is $a=2$:
\begin{align*} 
    \chi_{_{T'}}\big(([\cO]-[\cO(-H_i)])^{\otimes 2}\big) & =\chi_{_{T'}}([\cO])-2\chi_{_{T'}}([\cO(-H_i]))+\chi_{_{T'}}([\cO(-2H_i)])\\
    &=\chi_{_{T'}}([\cO])+\chi_{_{T'}}([\cO(-2H_i)])
    =1-\HH^1(\mathbb P^1,\cO(-2H_i)) \ .
    \end{align*}
Here, we used the fact that many cohomology groups vanish, which in this case can be deduced from \cite[Theorem III.5.1]{Hartshorne}. However, as we will see
in the article, more powerful tools are needed in general. In order to compute the top cohomology group, we apply toric Serre duality \cite[Theorem 9.2.10]{cox}, which
gives us:
\[\HH^1(\mathbb P^1,\cO(-2H_i))=\HH^0(\mathbb P^1,\cO(-H_1-H_2+2H_i))^\vee=\frac{t_1t_2}{t_i^2}.\]
Hence, we obtain
\[ \chi_{_{T'}}\big(([\cO]-[\cO(-H_i)])^{\otimes 2}\big)=1-\frac{t_1t_2}{t_i^2},\]
which indeed evaluated at $(1,1)$ gives $0$, but clearly it is a non-zero Laurent polynomial. Alternatively, if we replace $([\cO]-[\cO(-H_i)])^{\otimes 2}$ by $([\cO]-[\cO(-H_1)])
\otimes([\cO]-[\cO(-H_2)])$, we get the equivariant Euler characteristic as
\[ {\chi}_{_{T'}}\big( ([\cO]-[\cO(-H_1)])
\otimes([\cO]-[\cO(-H_2)]) \big) =    {\chi}_{_{T'}}( [\cO] ) + {\chi}_{_{T'}}([\cO(-H_1-H_2)]) = 1 - \HH^0(\mathbb P^1,\cO)^\vee = 0, \]
which actually is the zero polynomial (not only its evaluation at $(1,1)$ is zero). 

Next, we consider the action of $T$. While it is still possible to perform the toric computations as above, let us present a different method, based on equivariant localization.
In general, this is carried out in Section \ref{sec:eqloc}. Note that $\cO(1)$ comes with the natural $T$-action. The two global sections $x_1$ and $x_2$ are acted on as follows:
\[(t_1,t_2) \cdot x_1=t_1^{-1}x_1, \quad (t_1,t_2)\cdot  x_2=t_2^{-1} x_2.\]
The inverses come from the fact that $T$ acts on $\PP^1=\PP(\CC^2)$ and the global sections of $\cO(1)$ are identified naturally with $(\CC^2)^\vee$. The very powerful method to deal
with equivariant $K$-theory is, for any vector bundle, to look at the vector spaces over torus invariant points. For $\cO(-1)$ we obtain the following two vector spaces:
\begin{itemize}
\item $(\star,0)\subset \CC^2$ over $[1:0]$ and
\item $(0,\star)\subset \CC^2$ over $[0:1]$. 
\end{itemize}
Here, the reader may recall that $\cO(-1)$ is the universal bundle over the projective space. In particular, we obtain:
\begin{itemize}
\item the character $t_1$ over $[1:0]$ and
\item the character $t_2$ over $[0:1]$.
\end{itemize}
To simplify notation, we will write the pair of characters $(t_1,t_2)$ to denote the class $[\cO(1)]$ over the two torus fixed points. In fact, previously, the linearizations of $H_1$ and $H_2$,
with respect to the torus $T'$, correspond to $(t_1/t_2,1)$ and $(1, t_2/t_1)$, where now the Laurent monomials are not only characters of $T$ but also of $T'$.
However, for now, we fix $(t_1,t_2)$, so that no particular choices need to be made. Furthermore, the class $[\cO(-2)]$ is represented by $(t_1^2, t_2^2)$, as the tensor product of classes corresponds to the multiplication of characters.
We also represent $[\cO]-[\cO(-1)]$ by $(1-t_1, 1-t_2)$. Once we know the cotangent directions at every point, we may compute the Euler characteristic using the formula from Theorem \ref{formula}.
In this example, for $[1:0]$ we have one character $t_1/t_2$ and for $[0:1]$ we have $t_2/t_1$. Hence, we have:
\[\chi_{_T}([\cO]-[\cO(-1)])=\frac{1-t_1}{1-t_1/t_2}+\frac{1-t_2}{1-t_2/t_1}=1\]
as computed before so that the choice of linearization of $\cO(-1)$ is not seen here. However, as the $K$-class of $([\cO]-[\cO(-1)])^{\otimes 2}$ is represented by 
$((1-t_1)^2,(1-t_2)^2)$, we obtain
\[\chi_{_T}(([\cO]-[\cO(-1)])^{\otimes 2})=\frac{(1-t_1)^2}{1-t_1/t_2}+\frac{(1-t_2)^2}{1-t_2/t_1}=1-t_1t_2,\]
which compare to the previous computation, explains the difference in the choice of linearization of $\cO(-1)$.

\section{Computation of the Euler characteristic}\label{Comput}

\indent \indent In this section, we compute the equivariant Euler characteristic of the desired $K$-class in two ways.
Fix $i,j\in [n+1]$, it is known that for any $0\leq a \leq n$ the  following holds
\[  \int_{X_E}[\Gamma_i]^a\cdot [\Delta_j]^{n-a}=\binom{n}{a} \ .\]
However, the above equality is only a degree information, we are interested in determining the intersection class by taking into account the $T$-action. As already pointed out in Subsection \ref{subsec:example P1}, fixing the indices $i,j$ means choosing linearizations for $\cO(1)$. In the following, we mainly consider the action of the small torus $T'=T/\langle \mathbbm{1}\rangle$ and representation of the characters in its lattice $M_{T'}$.\\

In order to reach our goal, we pass from the Chow-theoretic expression of this intersection to the $K$-theoretic expression of it in the Grothendieck ring $K^{0}(X_n)$ via the Hirzebruch-Riemann-Roch formula $\chi(\cal F)= \int_{X_n} ch (\cal F)\Td(X_n)$. By \cite[Lemma~3.10]{mt}, we obtain
\[ \chi\left( \big([\OO_{X_n}]-[\OO_{X_n}(-\Gamma_i)]\big)^{\otimes a}\otimes \big([\OO_{X_n}]-[\OO_{X_n}(-\Delta_j)]\big)^{\otimes n-a} \right) =  \int_{X_E}[\Gamma_i]^a\cdot [\Delta_j]^{n-a} \ .\]
Our main aim is the computation of the equivariant Euler characteristic on the left-hand side
for the $T$-equivariant class \[\big([\OO_{X_n}]-[\OO_{X_n}(-\Gamma_i)]\big)^{\otimes a}\otimes \big([\OO_{X_n}]-[\OO_{X_n}(-\Delta_j)]\big)^{\otimes n-a} \in K^{0}_T(X_n).\]
To simplify the notation, we usually omit $X_n$ in the presentation of the $K$-classes of bundles in $K_T(X_n)$, and the tensor products in the exponents.\\
\indent Let $\cA_{n,i}:=([\OO]-[\OO(-\Gamma_{i})])$ and $\cB_{n,j}:=([\OO]-[\OO(-\Delta_{j})])$ be the $T$-equivariant classes on the $n$-dimensional permutohedral variety $X_n$, and let $\cA_{n,i}^a=\cA_{n,i}^{\otimes a}$ and $\cB_{n,j}^b=\cB_{n,j}^{\otimes b}$. In this notation, our goal can be reformulated in the computation of
$$\chi_{_T}(\cA_{n,i}^a\otimes \cB_{n,j}^{n-a} ). $$
By expanding the tensor products and by setting 
\[D_{ij}^{k\ell}:= k\Gamma_i + \ell \Delta_j \in \Div^T(X_n), \]
for any $0\leq k\leq a$ and $0\leq \ell\leq n-a$, we have
\[ \chi_{_T}(\cA_{n,i}^a\otimes \cB_{n,j}^{n-a} ) =  \sum_{k=0}^a\sum_{\ell=0}^{n-a}(-1)^{k+l}\binom{a}{k}\binom{n-a}{\ell}\chi_{_T}\left( [\OO(-D_{ij}^{k\ell})]\right). \]

In the following subsections, we compute the equivariant Euler characteristics $\chi_{_T}([\OO(-D_{ij}^{k\ell})])$ via two different approaches: the first approach consists in determining the cohomology groups of the line bundles $\OO(-D_{ij}^{k\ell})$, while the second one is based on the direct computation of $\chi_{_T}([\OO(-D_{ij}^{k\ell})])$ using the equivariant localization formula. Within the first approach, the equivariant Euler characteristic of a $K$-class is a formal sum of irreducible $T$-representations, which are always one-dimensional and correspond to characters on the torus, hence to points in $\mathbb Z^{n+1}$ or equivalently to Laurent polynomials in $\mathbb Z[t_1^{\pm 1}, \ldots, t_{n+1}^{\pm 1}]$, where a character $(m_1,\ldots, m_{n+1})$ is identified with the Laurent monomial $ t_1^{m_1}\cdots t_{n+1}^{m_{n+1}}$. Throughout this paper, we represent the Euler characteristic of a $K$-class as a Laurent polynomial.

\subsection{First approach: computing cohomology groups}\label{sec:firstap}
\indent \indent In this subsection, we determine the equivariant cohomology groups appearing in the equivariant Euler characteristic $\chi_{_T}\left( [\OO(-D_{ij}^{k\ell})]\right )$ as $k$ and $\ell$ vary.  First, we prove the vanishing of many intermediate cohomology groups and, consequently, we reduce the computation of the equivariant Euler characteristic to the computation of only the vector spaces of the global sections, that is $\HH^0$-terms. Since each such a cohomology group inherits the $T$-action on $X_n$, and hence it is a  representation of the torus $T$, we determine their irreducible components. \\

We recall the following vanishing results, used frequently in this section.
\begin{theorem}[Toric Kawamata-Viehweg; Theorem 9.3.10, \cite{cox}]\label{TKV}
	Let $X$ be a toric variety. For any big and nef divisor $D\in \Div^T(X)$, it holds
	\[ \HH^p(X,\OO(K_X+D))= 0, \ \ \ \forall p>0 \ .\]
	\end{theorem}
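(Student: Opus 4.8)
The plan is to reduce the statement to a Kodaira-type vanishing for the ``opposite'' line bundle via Serre duality, and then to establish that vanishing through the combinatorial description of sheaf cohomology on toric varieties. Throughout I take $X$ to be complete, as in the setting of \cite{cox}; in the application of interest $X=X_n$ is smooth, so I may assume that $D$ is Cartier and that $\omega_X=\OO(K_X)$ is a line bundle. Writing $n=\dim X$, toric Serre duality \cite[Theorem 9.2.10]{cox} gives
\[ \HH^p\!\big(X,\OO(K_X+D)\big)\ \cong\ \HH^{\,n-p}\!\big(X,\OO(-D)\big)^{\vee}. \]
Hence the desired vanishing $\HH^p(X,\OO(K_X+D))=0$ for all $p>0$ is equivalent to $\HH^q(X,\OO(-D))=0$ for all $q<n$, and it is this latter statement that I would prove.

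To compute $\HH^q(X,\OO(-D))$ combinatorially I would use that $-D$ is torus-invariant, so the cohomology carries an $M$-grading
\[ \HH^q\!\big(X,\OO(-D)\big)\ =\ \bigoplus_{m\in M}\HH^q\!\big(X,\OO(-D)\big)_m, \]
and each graded piece is identified with a reduced singular cohomology group $\widetilde{\HH}^{\,q-1}(V_m;\CC)$ of an explicit subset $V_m\subseteq N_{\mathbb R}$ (intersected with a sphere about the origin) built from the rays $u_\rho$ on which the functional $\langle m,-\rangle$ violates the defining inequalities of $-D$; this is the standard toric cohomology formula of \cite{cox}. The whole problem thus becomes a matter of controlling the homotopy type of $V_m$ as $m$ ranges over $M$.

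Here is where the hypotheses enter, and this is essentially the content of the Batyrev--Borisov vanishing theorem. Because $D$ is nef, its support function $\varphi_D$ is convex, and the region governing $V_m$ is cut out by the convex function $\langle m,-\rangle+\varphi_D$; a convexity argument then shows that for every $m$ the reduced cohomology $\widetilde{\HH}^{\,\ast}(V_m)$ is trivial \emph{except} that $V_m$ can acquire the homotopy type of a sphere $S^{\,\dim P_D-1}$, and that this happens precisely when $m$ lies in the relative interior of the polytope $P_D=\{m:\langle m,u_\rho\rangle\ge -a_\rho\ \forall\rho\}$ (writing $D=\sum_\rho a_\rho D_\rho$). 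Consequently the only surviving graded contributions sit in cohomological degree $q=\dim P_D$. Now the bigness of $D$ forces $P_D$ to be full-dimensional, i.e. $\dim P_D=n$, so $\HH^q(X,\OO(-D))=0$ for every $q<n$. Combined with the Serre-duality reduction of the first paragraph, this yields $\HH^p(X,\OO(K_X+D))=0$ for all $p>0$. (One may instead invoke the Batyrev--Borisov theorem directly from \cite{cox} at this point.)

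I expect the main obstacle to be precisely the topological step: proving that each $V_m$ is empty, contractible, or homotopy-equivalent to a sphere, and pinning the unique possibly-nonvanishing degree to $\dim P_D$. The convexity coming from nefness makes the superlevel/sublevel sets of $\langle m,-\rangle+\varphi_D$ convex, which gives the acyclicity outside the sphere case fairly cleanly, but the delicate part is the bookkeeping at the boundary of $P_D$ --- relating the strict inequalities $\langle m,u_\rho\rangle< -a_\rho$ exactly to the faces of $\Sigma$ and to the location of $m$ relative to $P_D$, and verifying that full-dimensionality of $P_D$ (bigness) is what excludes the spherical contribution in all degrees below $n$. Once this combinatorial geometry is established, the rest of the argument is formal.
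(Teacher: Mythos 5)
The paper offers no proof of this statement at all: Theorem \ref{TKV} is imported verbatim as Theorem 9.3.10 of \cite{cox}, just as the neighbouring Batyrev--Borisov vanishing (Theorem \ref{BB}) is. So the only meaningful comparison is with the textbook argument, and your proposal is essentially exactly that argument: toric Serre duality \cite[Theorem 9.2.10]{cox} turns the claim into $\HH^q(X,\OO(-D))=0$ for all $q<\dim X$; Batyrev--Borisov confines the possible nonvanishing of $\HH^q(X,\OO(-D))$ to the single degree $q=\dim P_D$; and for a \emph{nef} divisor, bigness is equivalent to $P_D$ being full-dimensional (for instance because $D^{\dim X}=(\dim X)!\cdot\mathrm{vol}(P_D)$ when $D$ is nef, and a nef divisor is big iff its top self-intersection is positive), which excludes every degree $q<\dim X$. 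Your further sketch of how Batyrev--Borisov itself is proved --- the $M$-grading of cohomology, the sets $V_m$, and the convexity/concavity of $u\mapsto\langle m,u\rangle+\varphi_D(u)$ forcing each $V_m$ to be acyclic or a sphere of dimension $\dim P_D-1$ indexed by $m$ in the relative interior of $P_D$ --- is also the standard route (up to sign conventions on $\varphi_D$), though, as you note, one may simply cite it, which is what both the paper and \cite{cox}'s proof of 9.3.10 effectively do. Two small points of hygiene: the statement needs $X$ complete (otherwise Serre duality in this form and the finiteness of the graded pieces fail), a hypothesis the paper leaves implicit; and $D$ should be Cartier (or $\mathbb{Q}$-Cartier with appropriate rounding) for $P_D$, $\varphi_D$ and the duality to behave as you use them. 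Both hold in the paper's application, where $X=X_n$ is smooth and projective, so your proof is correct in the setting where the theorem is actually used.
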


\begin{theorem}[Batyrev-Borisov vanishing; Theorem 9.2.7, \cite{cox}]\label{BB}
	Let $X$ be a toric variety. For any nef divisor $D \in \Div^T(X)$, it holds 
	\[ \HH^p(X,\OO(-D))=0, \ \ \ \forall p \neq \dim P_D \ .\]
 where $P_D$ is the associated polytope to the divisor $D$.
	\end{theorem}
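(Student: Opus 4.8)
The plan is to strip the statement down to a purely combinatorial computation of the $T$-graded pieces of sheaf cohomology. Since $\OO(-D)$ carries a $T$-linearization, the torus action splits each cohomology group into weight spaces indexed by characters,
\[ \HH^p(X,\OO(-D)) \;=\; \bigoplus_{m\in M} \HH^p(X,\OO(-D))_m , \]
so it suffices to control each weight space individually. The first step is to invoke the standard local/\v{C}ech description of these pieces: for a torus-invariant Cartier divisor one has an isomorphism $\HH^p(X,\OO(-D))_m \cong \widetilde{\HH}^{\,p-1}(Z_m;\CC)$ with a reduced cohomology group of a subset $Z_m$ of the fan support $|\Sigma|$ (equivalently of a sphere), where $Z_m$ is carved out by comparing the linear functional $\scal{m,-}$ against the support function $\varphi_D$ of $D$ cone by cone.

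The second step is to feed in the nefness hypothesis. Writing $D=\sum_\rho a_\rho D_\rho$, the associated polytope is $P_D=\{m\in M_{\mathbb R}:\scal{m,u_\rho}\geq -a_\rho\ \forall\rho\}$, and nefness means precisely that $\varphi_D$ is convex. This convexity is exactly what forces each locus $Z_m$ to be empty, contractible, or a homotopy sphere of a single dimension determined by the position of $m$ relative to $P_D$; consequently $\widetilde{\HH}^{\,\bullet}(Z_m)$ is concentrated in one degree, and for every $m$ that contributes at all that degree is $p-1$ with $p=\dim P_D$. Summing over $m$ then yields $\HH^p(X,\OO(-D))=0$ for $p\neq\dim P_D$, as desired.

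I expect the hard part to be the topological identification of the sets $Z_m$ together with the sign bookkeeping between $D$, $-D$ and $\varphi_D$: one must verify that convexity of $\varphi_D$ really produces contractibility (rather than some intermediate homotopy type) and that the single surviving degree is uniformly $\dim P_D$ and not $m$-dependent. A cleaner route that circumvents part of this is to use that a nef $D$ induces a proper toric fibration $\pi\colon X\to X_{P_D}$ onto the complete toric variety of $P_D$, with $D=\pi^*A$ for $A$ ample and with $\pi_*\OO_X=\OO_{X_{P_D}}$, $R^q\pi_*\OO_X=0$ for $q>0$. The projection formula then gives $\HH^p(X,\OO(-D))\cong\HH^p(X_{P_D},\OO(-A))$, reducing everything to the strictly ample case on a variety of dimension exactly $\dim P_D=\dim X_{P_D}$. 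There the anti-ample $\OO(-A)$ has cohomology concentrated in top degree, which follows either from the same combinatorial computation (now with a strictly convex support function) or, for the top piece, from Serre duality against the interior lattice points of $P_A$. Either way, this ample case is where the genuine content sits.
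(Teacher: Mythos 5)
The paper does not prove this statement at all: it is quoted, with attribution, directly from \cite[Theorem 9.2.7]{cox}, so there is no internal argument to compare against, and the real question is whether your sketch faithfully reconstructs a correct proof. Your first route does, and it is in fact the proof given in the cited reference: the weight-space decomposition, the identification of $\HH^p(X,\OO(-D))_m$ with reduced cohomology of the locus where $\langle m,-\rangle$ drops below the support function of $-D$, and the observation that nefness (convexity of the support function) forces this locus to be either empty, or star-shaped with respect to a suitable point (hence contractible), or the complement of a linear subspace of codimension $\dim P_D$ --- hence homotopy equivalent to $S^{\dim P_D - 1}$, and this happens exactly when $-m$ lies in the relative interior of $P_D$ --- which concentrates all cohomology in the single degree $\dim P_D$ and simultaneously identifies the top group with the interior lattice points. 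Your alternative route is also sound: a nef $D$ has convex support function whose maximal domains of linearity form the (generalized) normal fan of $P_D$, giving a proper toric morphism $\pi\colon X\to X_{P_D}$ with $D=\pi^*A$ for $A$ ample, and $\pi_*\OO_X=\OO_{X_{P_D}}$, $R^q\pi_*\OO_X=0$ for $q>0$ plus the projection formula reduce everything to the ample, full-dimensional case. But note what this buys and what it does not: the vanishing of higher direct images for proper toric morphisms is itself a neighboring theorem in the same chapter of the reference, Serre duality on the possibly singular $X_{P_D}$ requires its Cohen--Macaulay property, and the ample case still needs either the same combinatorial weight-space computation or toric Kodaira-type vanishing --- so, as you yourself observe, this route repackages rather than avoids the genuine content.
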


\indent The first step is computing the equivariant Euler characteristic of the structure sheaf $\chi_{_T}([\OO])$. Since the anticanonical divisor $-K_{X}$ is big and nef, by Theorem \ref{TKV} we get the vanishings $\HH^p(X,\OO)=\HH^p(X,K_X+(-K_X))=0$ for any $p>0$ and we conclude that 
\begin{equation}\label{chi(O)}
	 \chi_{_T}([\OO])=\HH^0(X,\OO)=1. 
	\end{equation}
\indent Next, we consider the case in which either $k=0$ or $\ell=0$.
\begin{proposition}\label{k0ORl0}
    For any $1 \leq k, \ell\leq n$, it holds
    \[ \chi_{_T}([\OO(-k\Gamma_i)])=\chi_{_T}([\OO(-\ell\Delta_j)])=0 \ . \]
\end{proposition}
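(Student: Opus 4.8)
The plan is to prove the stronger statement that \emph{every} equivariant cohomology group of $\OO(-k\Gamma_i)$ vanishes; then the equivariant Euler characteristic, being the alternating sum $\sum_p(-1)^p[\HH^p(X_n,\OO(-k\Gamma_i))]$ of the cohomology representations, is the zero Laurent polynomial (not merely a polynomial vanishing at $(1,\dots,1)$). By the Cremona symmetry of \eqref{cremona diagram}, which interchanges the roles of $\Gamma_i$ and $\Delta_i$ and acts on $K^0_T(X_n)$ as in \eqref{fiber via crem}, it is enough to treat $\chi_{_T}([\OO(-k\Gamma_i)])$; the assertion for $\Delta_j$ then follows formally, or equally well by rerunning the argument below with $p$ replaced by the second projection $q$.

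The quickest route uses the projection $p\colon X_n\to\PP^n$. Since $\Gamma_i=p^\ast H_i$ is the pullback of a $T$-invariant hyperplane, one has $\OO(-k\Gamma_i)=p^\ast\OO_{\PP^n}(-k)$ equivariantly. As $p$ is a birational $T$-equivariant toric morphism onto the smooth variety $\PP^n$, we have $Rp_\ast\OO_{X_n}=\OO_{\PP^n}$, and the projection formula yields $Rp_\ast\OO(-k\Gamma_i)=\OO_{\PP^n}(-k)$. Hence $\chi_{_T}([\OO(-k\Gamma_i)])=\chi_{_T}(\PP^n,\OO_{\PP^n}(-k))$, and for $1\le k\le n$ every group $\HH^p(\PP^n,\OO(-k))$ already vanishes, so the equivariant Euler characteristic is $0$.

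Staying instead within the toric vanishing machinery emphasized in the paper, I would argue as follows. Since $k\Gamma_i$ is big and nef (Lemma \ref{lem:bigandnef}), its polytope is full-dimensional, $\dim P_{k\Gamma_i}=n$, so Batyrev--Borisov (Theorem \ref{BB}) kills $\HH^p(X_n,\OO(-k\Gamma_i))$ for every $p\neq n$. It remains to control the top group, for which equivariant Serre duality (up to a character twist immaterial for vanishing) gives
\[ \HH^n\big(X_n,\OO(-k\Gamma_i)\big)\ \cong\ \HH^0\big(X_n,\OO(K_{X_n}+k\Gamma_i)\big)^\vee\ =\ \HH^0\big(X_n,\OO((k-1)\Gamma_i-\Delta_i)\big)^\vee, \]
using $K_{X_n}=-\Gamma_i-\Delta_i$ from \eqref{canonical divisor}.

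The main obstacle is therefore the vanishing of this last space of global sections, and this is exactly where the hypothesis $k\le n$ is consumed. I would compute it combinatorially: writing $D=(k-1)\Gamma_i-\Delta_i=\sum_S a_S D_S$ with $a_S=k-1$ for $i\in S$ and $a_S=-1$ for $i\notin S$, a global section corresponds to a lattice point $m\in M_{T'}$ (so $\sum_s m_s=0$) satisfying $\sum_{s\in S}m_s\ge -a_S$ for every ray $u_S$. The singleton rays $S=\{j\}$ with $j\neq i$ force $m_j\ge 1$, whence $-m_i=\sum_{j\neq i}m_j\ge n$, while the ray $S=\{i\}$ forces $m_i\ge 1-k$. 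Together these give $n\le -m_i\le k-1$, i.e.\ $k\ge n+1$, contradicting $k\le n$. Hence there is no such lattice point, $\HH^0(X_n,\OO((k-1)\Gamma_i-\Delta_i))=0$, the top cohomology vanishes, and all cohomology groups of $\OO(-k\Gamma_i)$ being zero we conclude $\chi_{_T}([\OO(-k\Gamma_i)])=0$.
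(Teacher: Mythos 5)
Your proposal is correct, and it in fact contains two complete arguments. The second one (``staying within the toric vanishing machinery'') is essentially the paper's own proof: Batyrev--Borisov kills $\HH^p$ for $p\neq n$, toric Serre duality converts $\HH^n(X_n,\OO(-k\Gamma_i))$ into $\HH^0(X_n,\OO((k-1)\Gamma_i-\Delta_i))^\vee$, and one checks that the section polytope contains no lattice points. Your emptiness argument is a streamlined variant of the paper's: you use only the singleton rays $u_{\{j\}}$, $u_{\{i\}}$ together with the zero-sum constraint on $M_{T'}$, whereas the paper combines the inequalities for the cardinality-$n$ subsets $\{i,j_1,\dots,j_{n-1}\}$ with the ray relations \eqref{rays relations}; note also that you use the convention $\langle m,u_S\rangle\ge -a_S$ of \cite{cox}, while the paper's display \eqref{conditions on lattice points for H0} uses $\langle m,u_S\rangle\le a_S$ --- the two solution sets differ by $m\mapsto -m$, so emptiness (hence vanishing) is unaffected. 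Your first route is genuinely different from the paper: since $\OO(-k\Gamma_i)=p^*\OO_{\PP^n}(-kH_i)$ and $p$ is a proper birational toric morphism onto the smooth $\PP^n$, one has $Rp_*\OO_{X_n}=\OO_{\PP^n}$, so the projection formula and the Leray spectral sequence identify all equivariant cohomology of $\OO(-k\Gamma_i)$ with that of $\OO_{\PP^n}(-k)$, which vanishes in every degree for $1\le k\le n$. This is shorter, avoids both Batyrev--Borisov and Serre duality, and gives the vanishing of every cohomology group directly. What it does not do --- and what explains the paper's choice of method --- is generalize: the mixed divisors $D_{ij}^{k\ell}=k\Gamma_i+\ell\Delta_j$ with $k,\ell\ge 1$, treated immediately after this proposition, are not pullbacks under either projection, so the pushforward trick is unavailable there, while the big-and-nef/Serre-duality/lattice-point scheme carries over verbatim. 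Finally, both of your reductions of the $\Delta_j$ case are sound: the Cremona involution is not $T$-equivariant, so it may change the representations, but it preserves vanishing (which is all that is claimed), exactly as in the paper's appeal to symmetry; rerunning the argument with $q$ in place of $p$ works equally well.
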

\begin{proof}
    By symmetry and freedom on the choice of the pullbacks $\Gamma_i$ and $\Delta_j$, the sheaves $\OO(-k\Gamma_i)$ and $\OO(-\ell \Delta_j)$ have common vanishings in cohomology, although their non-zero cohomology groups could split in different representations. In light of this, it is enough to prove the statement for $\chi_{_T}([\OO(-k\Gamma_i)])$ as $1\leq k\leq n$ varies. Since the divisor $k\Gamma_i$ is big and nef, by Theorem \ref{BB} we get \[\HH^p(X,\OO(-k\Gamma_i))=0 \ \ \ \ \ , \forall p \neq \dim P_{k\Gamma_i}=n \ . \]
Moreover, by Serre duality (SD)
\[ \HH^n(X,\OO(-k\Gamma_i))\stackrel{SD}{=}\HH^0(X,\OO(K_X+k\Gamma_i))^\vee\stackrel{\eqref{canonical divisor}}{=}\HH^0(X,\OO((k-1)\Gamma_i-\Delta_i))^\vee \ . \]
The characters $m \in M_{_{T'}}\subset\mathbb Z^{n+1}$ giving the splitting of $\HH^0(X,\OO((k-1)\Gamma_i-\Delta_i))$ into irreducible one-dimensional torus representations are exactly the ones satisfying the inequalities
\begin{equation}\label{conditions on lattice points for H0} 
	\langle m, u_S\rangle \leq \begin{cases}
	k-1 & \text{if } i \in S\\
	-1 & \text{if } i \notin S
	\end{cases} 
	\end{equation}
for any non-empty proper subset $S\subset [n+1]$. In particular, by \eqref{rays} we get 
\[ \begin{cases}
	\langle m, u_{\{i,j_1,\ldots, j_r\}}\rangle \leq (k-1)-r \\
	\langle m, u_{\{j_1,\ldots, j_r\}}\rangle \leq -r
	\end{cases} \] 
while by \eqref{rays relations} it holds
\[ u_{[n+1]\setminus \{t_1,\ldots, t_s\}}=-u_{\{t_1,\ldots, t_s\}} \implies \begin{cases}
	1\leq \langle m, u_{\{i,j_1,\ldots, j_r\}}\rangle \\
	-k+1 \leq \langle m, u_{\{j_1,\ldots, j_r\}}\rangle
	\end{cases} \ , \]
 where each $j_l\neq i$. Since we consider only proper subsets, $r$ can be at most $n-1$. Consider $r=n-1$ and recall that $k\in\{1, \ldots, n\}$. Thus, since $k< n+1$, we have $1\leq \langle m,u_{\{i,j_1,\ldots, j_{n-1}\}}\rangle \leq k-n < 1$, leading to a contradiction. Therefore, 
\[ \HH^n(X,\OO(-k\Gamma_i))=\HH^0(X,\OO((k-1)\Gamma_i-\Delta_i))^\vee=0, \]
and from the common vanishings for the sheaves $[\OO(-k\Gamma_i)]$ and $[\OO(-\ell \Delta_j)]$, we conclude that 
\begin{equation}\label{chi(kGamma)}
	\chi_{_T}([\OO(-k\Gamma_i)])=\chi_{_T}([\OO(-k\Delta_j)])=0, \ \ \  \forall 1\leq k\leq n . 
	\end{equation}
\end{proof}
	
It remains to deal with the divisors $D_{ij}^{k\ell}=k \Gamma_i+\ell \Delta_j$ for $k,\ell\geq 1$. Since they are big and nef, by Theorem \ref{TKV}, we know that $\HH^p(X,\OO(-D_{ij}^{k\ell}))=0$ for any $p\neq n$, thus we only have to determine
\begin{align*} 
	\HH^n(X_n,\OO(-D_{ij}^{k\ell})) = \HH^0(X_n,\OO(K_X+D_{ij}^{k\ell}))^\vee = \HH^0(X_n,\OO( (k-1)\Gamma_i-\Delta_i+ \ell \Delta_j))^\vee \ .
	\end{align*}
At this point we have obtained that $\chi_{_T}(\cA_{n,i}^a\otimes \cB_{n,j}^{n-a} )$ only depend on global sections:
\begin{equation}\label{chi as global sections}
	\chi_{_T}(\cA_{n,i}^a\otimes \cB_{n,j}^{n-a} ) = 1+\sum_{k=1}^a\sum_{\ell=1}^{n-a}(-1)^{n+k+\ell}\binom{a}{k}\binom{n-a}{\ell}\HH^0(X_n, \OO((k-1)\Gamma_i-\Delta_i+\ell \Delta_j))^\vee \ .
	\end{equation}
We simplify the notation by denoting the divisor $S_{ij}^{k\ell} := (k-1)\Gamma_i -\Delta_i + \ell \Delta_j$, which can be rewritten with respect to the prime divisors as follows:
\[ S_{ij}^{k\ell} \ = \ (k-1)\!\!\sum_{\{i,j\} \subset S}\!\!D_S \ + \ (k+\ell -1)\!\!\!\sum_{i \in S, \ j\notin S}\!\!D_S \ -\!\!\!\sum_{i \notin S, \ j \in S}\!\!D_S \ + \ (\ell - 1)\!\!\sum_{i,j \notin S}D_S \ . \]
The above description allows determining the system of inequalities that a character $m \in M_{_{T'}}\subset \mathbb Z^{n+1}$ has to satisfy so that $\HH^0(X_n,\OO(S_{ij}^{k\ell}))_m\neq 0$. Let $m_S=\sum_{h\in S}m_h=\langle m, u_S\rangle$, then we get
\begin{align}\label{system of m_S}
		1 & \leq m_{\{i,s_1,\ldots , s_r\}} \leq k+\ell -1 \nonumber\\
		-(\ell - 1) & \leq m_{\{i,j, s_1,\ldots, s_r\}} \leq k-1  \\
		-(k+\ell -1) & \leq m_{\{j,s_1,\ldots, s_r\}} \leq -1 \nonumber\\
		-(k-1) & \leq m_{\{s_1,\ldots, s_r\}} \leq \ell -1 \ . \nonumber
		\end{align}
for any subset $\{s_1,\ldots, s_r\}\subset [n+1]\setminus \{i,j\}$ (the empty subset is admitted too).

\begin{remark}
	For $i=j$, one gets $S_{ii}^{k\ell}=(k-1)\Gamma_i+(\ell-1)\Delta_i= \ (k-1)\!\!\sum_{i \in S}\!\!D_S \ + \ (\ell - 1)\!\!\sum_{i \notin S}D_S$, leading to the system of inequalities:
	\begin{align*}
		-(\ell -1) & \leq m_{\{i,s_1,\ldots , s_r\}} \leq k -1 \\
		-(k-1) & \leq m_{\{s_1,\ldots, s_r\}} \leq \ell -1, 
		\end{align*}
	which always admits $(0,\ldots,0)\in \mathbb Z^{n+1}$ as a solution. On the other hand, for $i\neq j$, the character $(0,\ldots, 0)\in \mathbb Z^{n+1}$ is never a solution of \eqref{system of m_S} for any $k,\ell \geq 1$, thus 
	\[\HH^0(X_n,\OO(S_{ij}^{k\ell}))_{(0,\ldots, 0)}=\begin{cases}
		1 & \text{if } i=j\\
		0 & \text{if } i\neq j
		\end{cases} \ . \]
	\end{remark}
	
\begin{remark}\label{H^0(S^{11})}
		For $i\neq j$ and $k=\ell=1$ the only solution is $m=e_i-e_j\in M_{_{T'}}$, that is
		\[ \HH^0(X_n,\OO(S_{ij}^{11}))= \HH^0(X,\OO(-\Delta_i+\Delta_j))= t_it_j^{-1}. \]
		\end{remark}

\begin{remark}\label{nested solutions}
	Given $1 \leq k\leq k' \leq a$ and $1 \leq \ell \leq \ell ' \leq n-a$, from the lower and upper bounds in the system \eqref{system of m_S}, we deduce that the lattice solutions for $(k,\ell)$ are lattice solutions for $(k',\ell ')$ too. Indeed, the coordinate intervals for solutions increase as $k$ and $\ell$ increase. It follows that
	\[ \HH^0(X_n,\OO(S_{ij}^{k\ell})) \subset 	\HH^0(X_n,\OO(S_{ij}^{k'\ell '})), \ \forall (k',\ell ')\geq (k,\ell),\]
thus for any character $m\in M_{_{T'}}$ such that $H^0(X_n,\OO(S_{ij}^{k\ell}))_m \neq 0$ one gets 
\[\HH^0(X_n,\OO(S_{ij}^{k\ell}))_m =	\HH^0(X_n,\OO(S_{ij}^{k'\ell '}))_m \ . \]
For instance, if $i\neq j$, Remark \ref{H^0(S^{11})} implies that $\HH^0(X_n,\OO(S_{ij}^{k\ell}))_{e_i-e_j}^\vee=t_jt_i^{-1}$ for any $k,\ell \geq 1$, thus we can compute its coefficients in the equivariant Euler characteristic $\chi_{_T}(\cal A_{n,i}^a\otimes \cal B_{n,j}^{n-a})$:
\[
\chi_{_T}(\cA_{n,i}^{a}\otimes \cB_{n,j}^{n-a} )_{e_j-e_i}  = \sum_{k=1}^a(-1)^{n+k}\binom{a}{k} \left[ \sum_{\ell=1}^{n-a}(-1)^\ell \binom{n-a}{\ell} \right] \left(t_jt_i^{-1}\right)= (-1)^n \left( t_jt_i^{-1}\right).
\]
	\end{remark}
The above remark allows restricting the search of (possible) irreducible summands of the virtual representation $\chi_{_T}(\cA_{n,i}^a\otimes \cB_{n,j}^{n-a})$ among the ones of $\HH^0(X,\OO(S_{ij}^{a ,n-a}))$.\\
\hfill\break
\indent In the \textit{Macaulay2} file \texttt{EquivariantEC.m2}, the implementation of this approach computing the equivariant Euler characteristic $\chi_{_T}(\cal A_{n,i}^a\otimes \cal B_{n,j}^{n-a})$ comes  under the function \texttt{LPCFormula} which takes the input parameters $i,j,a,n$.

\begin{example}[$n=3$] \label{examp}
Consider the $3$-dimensional permutohedral variety $X_3$ and we denote $\{i,j,p,q\}=[4]$. Starting from \eqref{chi as global sections} and solving \eqref{system of m_S} one gets:
	\begin{align*}
	\chi_{_T}(\cA_{3,i}\otimes \cB_{3,i}^{2} ) & = 1 + \sum_{\ell=1}^2(-1)^{3+1+\ell}\binom{2}{\ell}\HH^0\left(X_3,\OO\big((\ell-1)\Delta_i\big)\right)^\vee\\
	& = t_it_j^{-1} + t_it_p^{-1} + t_it_q^{-1}
    \end{align*}
    \begin{align*} 
	\chi_{_T}(\cA_{3,i}\otimes \cB_{3,j}^{2} ) & = 1+ \sum_{\ell=1}^2(-1)^{3+1+\ell}\binom{2}{\ell}\HH^0\left(X_3,\OO\big(-\Delta_i+\ell \Delta_j\big)\right)^\vee\\
	& = 1 - t_i^{-1}t_j + t_i^{-2}t_j^2 + t_i^{-1}t_j^2t_p^{-1} + t_i^{-1}t_j^2t_q^{-1}\\
	\chi_{_T}(\cA_{3,i}^2\otimes \cB_{3,i} ) & = 1 + \sum_{k=1}^2(-1)^{3+1+k}\binom{2}{k}\HH^0\left(X_3,\OO\big((k-1)\Gamma_i\big)\right)^\vee \\
    & = t_jt_i^{-1} + t_pt_i^{-1} +t_qt_i^{-1}\\
	\chi_{_T}(\cA_{3,i}^2\otimes \cB_{3,j} ) & = 1 + \sum_{k=1}^2(-1)^{3+1+k}\binom{2}{k}\HH^0\left(X_3,\OO\big((k-1)\Gamma_i-\Delta_i+ \Delta_j\big)\right)^\vee\\
	& = 1 - t_i^{-1}t_j + t_i^{-2}t_j^2 + t_i^{-2}t_jt_p + t_i^{-2}t_jt_q.
	\end{align*}
Here, we exploit the functions in 
\texttt{EquivariantEC.m2} to compute $\chi_{_T}(\cA_{n,i}^a\otimes \cB_{n,j}^{n-a} )$ for the choice of parameters $(i,j,a,n)=(1,2,2,3)$. The idea is to describe the system \eqref{system of m_S} for $k,\ell$ fixed by a linear inequality $N\leq B_{k,\ell}$ and to find its integral solutions as lattice points lying in the polyhedron defined by the matrix $N$ and the vector $B_{k,\ell}$: we do so using the package \texttt{Polyhedra}. The matrix $N$ of binary coefficients parameterizing the variables $m_i$'s in the system of inequalities \eqref{system of m_S} is computed by the function \texttt{MatFun(n)} and then we use the function \texttt{BMat($\{$i,j$\}$,k,l,n)} for constructing the vector $B_{k,\ell}$ of the upper bounds in \eqref{system of m_S}. Built on that the function \texttt{LPCFormula} takes the sum of the solutions for running $k$ and $\ell$:\\
\hfill\break
    \texttt{i1: load"EquivariantEC.m2"\\
    i2:     EC=LPCFormula(\{1,2\},2,3)\\
            o2:  $\texttt{q}_{_{-2,1,1,0}} - \texttt{q}_{_{-1,1,0,0}} + \texttt{q}_{_{-2,2,0,0}} + \texttt{q}_{_{-2,1,0,1}} + \texttt{q}_{_{0,0,0,0}}$\\}
\hfill\break
The output \texttt{o2} is in concordance with the Euler characteristic computed above 
\[ \chi_{_T}\left( \cA_{3,1}^2\otimes \cB_{3,2} \right)= 1-t_2t_1^{-1}+t_2^2t_1^{-2}+t_2t_3t_1^{-2}+t_2t_4t_1^{-2}.\]
\end{example}

\subsection{Second approach: equivariant localization}\label{sec:eqloc}

\indent \indent In this section, we exploit techniques from equivariant geometry, in particular equivariant $ K$-theory. As the hallmark of this theory, we use the localization formula \ref{formula}. \\

In fact, using this formula to compute the equivariant Euler characteristic for the sheaf $[\OO(-D_{ij}^{k\ell})]$ one needs to compute the class $[\OO(-D_{ij}^{k\ell})]_\sigma$ for any $\sigma\in \mathfrak S_{n+1}$ which corresponds to the piece-wise linear functions on the cones $C_\sigma$ defined by the coefficients of $-D_{ij}^{k\ell}$ with respect to the prime $T$-invariant divisors. In particular, it holds
\begin{align}\label{coeff -D}
	-D_{ij}^{k\ell} = (-k-\ell)\cdot\!\!\!\sum_{S\cap \{i,j\}=\{i\}}\!\!\!D_S + 0 \cdot \!\!\!\sum_{S\cap \{i,j\}=\{j\}}\!\!\!D_S- \ k\cdot \!\!\sum_{S\supset \{i,j\}}\!\!\!D_S - \ \ell\cdot \!\!\!\sum_{S\subset [n+1]\setminus\{i,j\}}\!\!\!D_S \ .
	\end{align}
Recall that, via the bijection \eqref{bij S_n X^T}, any $\sigma \in \mathfrak S_{n+1}$ uniquely corresponds to the cone
\[ C_\sigma=Cone\left(u_{S_1},\ldots, u_{S_n} \right)\]
where $S_h=\{\sigma(1), \sigma(2), \ldots , \sigma(h)\}$ for any $1\leq h\leq n$. For simplicity, we identify the cones with their defining flags 
\[C_\sigma=(S_1\subsetneq \ldots \subsetneq S_n) \ . \]

\begin{lemma}\label{Dij}
	Given $-D_{ij}^{k\ell} \in \Div^T(X_n)$ as in \eqref{coeff -D}, the Grothendieck class $[\OO(-D_{ij}^{k\ell})]\in K^0_T(X_n)$ has fiber at any $T$-invariant point $\sigma \in \mathfrak S_{n+1}\simeq X_n^T$ given by
	\begin{equation}\label{fiber at sigma of O(-D)}
		[\OO(-D_{ij}^{k\ell})]_\sigma= t_{\sigma(1)}^{-\ell}t_i^{-k}t_j^\ell t_{\sigma(n+1)}^{k}  \ .
		\end{equation}
	\end{lemma}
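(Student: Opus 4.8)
The plan is to compute the two \emph{elementary} fibers $[\OO(\Gamma_i)]_\sigma$ and $[\OO(\Delta_j)]_\sigma$ separately and then combine them multiplicatively, using that $-D_{ij}^{k\ell} = -k\Gamma_i - \ell\Delta_j$, that a tensor product of line bundles corresponds to the product of the fiberwise characters, and that passing to $\OO(-\,\cdot\,)$ inverts the character (as recorded in \eqref{fiber of vector bundle as characters}). Concretely, once one knows
\[
[\OO(\Gamma_i)]_\sigma = t_i\, t_{\sigma(n+1)}^{-1}, \qquad [\OO(\Delta_j)]_\sigma = t_{\sigma(1)}\, t_j^{-1},
\]
the claimed formula follows at once from
\[
[\OO(-D_{ij}^{k\ell})]_\sigma = \big([\OO(\Gamma_i)]_\sigma\big)^{-k}\big([\OO(\Delta_j)]_\sigma\big)^{-\ell} = t_{\sigma(1)}^{-\ell}\,t_i^{-k}\,t_j^{\ell}\,t_{\sigma(n+1)}^{k}.
\]

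The key tool for the two elementary computations is the standard description of the $T'$-equivariant structure of a line bundle on a toric variety in terms of the local (Cartier) data of the underlying divisor: for a $T$-invariant divisor $E = \sum_S b_S D_S$, its fiber at the fixed point $P_\sigma$ is $\mathbf{t}^{m_\sigma}$, where $m_\sigma$ is the unique character in $M_{T'}$ (so $\sum_h (m_\sigma)_h = 0$) satisfying $\langle m_\sigma, u_{S_h}\rangle = b_{S_h}$ for each of the $n$ rays $u_{S_1}, \ldots, u_{S_n}$ spanning the cone $C_\sigma = (S_1\subsetneq\cdots\subsetneq S_n)$. Writing $p_h := \langle m_\sigma, u_{S_h}\rangle = (m_\sigma)_{\sigma(1)} + \cdots + (m_\sigma)_{\sigma(h)}$ for the partial sums, the system is triangular and is solved by reading off $(m_\sigma)_{\sigma(h)} = p_h - p_{h-1}$, the last coordinate $(m_\sigma)_{\sigma(n+1)}$ being fixed by the zero-sum constraint that pins down the $T'$-linearization.

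Applying this to $\Gamma_i = \sum_{i\in S} D_S$, the coefficient $b_{S_h}$ equals $1$ exactly when $i \in S_h$, i.e.\ when $h \geq \sigma^{-1}(i)$, so the sequence $(p_h)$ jumps from $0$ to $1$ precisely at $h = \sigma^{-1}(i)$; hence $(m_\sigma)_{\sigma(h)} = 0$ for $h\le n$ except $(m_\sigma)_i = 1$, and the zero-sum constraint forces $(m_\sigma)_{\sigma(n+1)} = -1$, giving $m_\sigma = e_i - e_{\sigma(n+1)}$ and $[\OO(\Gamma_i)]_\sigma = t_i t_{\sigma(n+1)}^{-1}$. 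Dually, for $\Delta_j = \sum_{j\notin S} D_S$ the coefficient is $1$ while $j\notin S_h$, i.e.\ for $h < \sigma^{-1}(j)$, so $(p_h)$ drops from $1$ to $0$ at $h = \sigma^{-1}(j)$, yielding $m_\sigma = e_{\sigma(1)} - e_j$ and $[\OO(\Delta_j)]_\sigma = t_{\sigma(1)} t_j^{-1}$. The coincidence cases, where $\sigma(1)$ or $\sigma(n+1)$ lands in $\{i,j\}$, are absorbed automatically once these monomials are combined, since e.g.\ the factors $t_i$ and $t_{\sigma(n+1)}$ simply cancel when $\sigma(n+1) = i$.

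I expect the only genuinely delicate point to be fixing the correct signs together with the correct lift of $m_\sigma$ that resolves the $\langle\mathbbm 1\rangle$-ambiguity, i.e.\ the precise normalization of the $T'$-linearization. To guard against a sign error I would sanity-check the final formula against the anticanonical case: taking $i=j$ and $k=\ell=1$ gives $-D_{ii}^{11} = K_{X_n}$, and the formula returns $[\OO(K_{X_n})]_\sigma = t_{\sigma(1)}^{-1} t_{\sigma(n+1)}$, which agrees with the determinant of the cotangent representation read off from the tangent-space generators \eqref{tg generators} (the factors telescope), and it also reproduces the $\PP^1$ computations of Subsection~\ref{subsec:example P1}. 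Everything else is the routine triangular solve described above.
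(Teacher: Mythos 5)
Your proposal is correct, but it is organized differently from the paper's own proof. The paper works directly with the combined divisor $-D_{ij}^{k\ell}$ as expanded in \eqref{coeff -D}, and at each fixed point splits into eight cases according to the positions of $i$ and $j$ in the permutation $\sigma$ (only case (i), where $\sigma(1)=i$ and $\sigma(n+1)=j$, is written out; the remaining cases are declared analogous); in each case it solves exactly the triangular system you describe, but for the piecewise-linear function attached to the full divisor. You instead exploit that the assignment $E\mapsto[\OO(E)]_\sigma$ is a homomorphism --- the local Cartier data are additive in the divisor, and restriction to a fixed point is a ring homomorphism on $K^0_T(X_n)$ --- so the triangular solve only needs to be done twice, once for $\Gamma_i$ and once for $\Delta_j$, producing the uniform elementary fibers $t_it_{\sigma(n+1)}^{-1}$ and $t_{\sigma(1)}t_j^{-1}$ whose degenerate configurations (e.g.\ $\sigma(n+1)=i$ or $\sigma(1)=j$) are absorbed automatically by cancellation. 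Your sign convention agrees with the paper's: in case (i) your product gives $\left(t_it_j^{-1}\right)^{-k}\left(t_it_j^{-1}\right)^{-\ell}=t_i^{-k-\ell}t_j^{k+\ell}$, which is precisely what the paper computes there. What your route buys is the elimination of the case analysis and two reusable one-line computations; what the paper's buys is a self-contained calculation that never needs to invoke multiplicativity of the fixed-point restriction. Your sanity checks --- the telescoping comparison of $-D_{ii}^{11}=K_{X_n}$ against the determinant of the cotangent representation from \eqref{tg generators} and \eqref{canonical divisor}, and the consistency with the $\PP^1$ computations of Subsection \ref{subsec:example P1} --- are sound and correctly resolve the normalization ambiguity you flag as the delicate point.
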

\begin{proof}
The value of the piece-wise linear functions on the cones $C_\sigma$ depends on the intersections $S_h\cap\{i,j\}$ and we have the following cases:
\begin{enumerate}
	\item[(i)] $\sigma(1)=i$ and $\sigma(n+1)=j$;
	\item[(ii)] $\sigma(p)=i$ for some $1 \lneq p \leq n$, and  $\sigma(n+1)=j$;
	\item[(iii)] $\sigma(1)=j$ and $\sigma(n+1)=i$;
	\item[(iv)] $\sigma(p)=j$ for some $1 \lneq p\leq n$, and $\sigma(n+1)=i$;
	\item[(v)] $\sigma(1)=i$ and $\sigma(p)=j$ for some $1\lneq p\leq n$;
	\item[(vi)] $\sigma(1)=j$ and $\sigma(p)=i$ for some $1\lneq p\leq n$;
	\item[(vii)] $\sigma(p)=i$ and $\sigma(q)=j$ for some $1\lneq p<q \leq n$;
	\item[(viii)] $\sigma(p)=j$ and $\sigma(q)=i$ for some $1\lneq p<q \leq n$.
	\end{enumerate}
Let us consider case (i). The cone $C_\sigma$ corresponds to the flag
\[ S_1=\{i\} \subsetneq S_2=\{i,\sigma(2)\} \subsetneq \ldots \subsetneq S_n=\{i,\sigma(2), \ldots, \sigma(n)\} \ .\]
Then for any above $S_h$ it holds $S_h\cap \{i,j\}=\{i\}$, that is from \eqref{coeff -D} we get a linear function $\phi_\sigma$ which has value $-k-\ell$ on any ray generator from the given cone, i.e.~corresponding to one $S_h$. 
In particular, $-k-\ell 
=\phi_\sigma(u_{\{i\}})$, while for any $h\geq 2$ we get $-k-\ell= 
\phi_\sigma(u_{S_h})=\phi_\sigma(u_{\{i\}}+u_{\sigma(2)}+\ldots + u_{\sigma(h)})$ which implies that $\phi_\sigma(u_{\sigma(p)})=0$ for any $2\leq p \leq n$. Thus for any $\sigma$ satisfying (i),  one gets 
\[[\OO(-D_{ij}^{kl})]_\sigma=t_i^{-k-\ell}  t_j^{k+\ell}\ . \]
By repeating the above arguments for each one of the cases (i)-(viii), we obtain the desired result. 
\end{proof}
\indent By replacing \eqref{fiber at sigma of O(-D)} in the localization formula \eqref{localization formula}, we get
\begin{align}\label{localization formula for O(-D)}
	 \chi_{_T}([\OO(-D_{ij}^{k\ell})]) & = \sum_{\sigma\in \mathfrak S_{n+1}} \dfrac{[\OO(-D_{ij}^{k\ell})]_\sigma}{\prod_{s=1}^{n}\left(1-t_{\sigma(s)}^{-1}t_{\sigma(s+1)}\right)} \\
  & = t_i^{-k}t_j^\ell \sum_{\sigma\in \mathfrak S_{n+1}} \dfrac{t_{\sigma(1)}^{-\ell} t_{\sigma(n+1)}^{k}}{\prod_{s=1}^{n}\left(1-t_{\sigma(s)}^{-1}t_{\sigma(s+1)}\right)} \ .
\end{align}

\noindent By using the vanishings in the previous section, we get
{\small \begin{align}\label{chi(F)withSigma}
	\chi_{_T}(\cA_{n,i}^a\otimes \cB_{n,j}^{n-a} ) & = 1 + \sum_{k=1}^{a}\sum_{\ell=1}^{n-a}(-1)^{k+\ell}\binom{a}{k}\binom{n-a}{\ell}\chi_{_T}\left( [\OO(-D_{ij}^{k\ell})]\right) \\
	&= 1+\sum_{k=1}^{a}\sum_{\ell=1}^{n-a}(-1)^{k+\ell}\binom{a}{k}\binom{n-a}{\ell}\sum_{\sigma\in \mathfrak S_{n+1}} \dfrac{t_{\sigma(1)}^{-\ell}t_i^{-k}t_j^\ell t_{\sigma(n+1)}^{k}}{\prod_{s=1}^{n}\left(1-t_{\sigma(s)}^{-1}t_{\sigma(s+1)}\right)} \ .
\end{align}}
\begin{remark}
    Working with the torus $T'$, we may eliminate $t_{n+1}$ taking into account that the sum of coordinates of each character is zero. 
    \end{remark}

\begin{example}\label{examp:second approach}($n=3$)
    We describe the implementation of our code for the second approach appearing in the file \texttt{EquivariantEC.m2}. As in Example \ref{examp}, we use it for computing the equivariant Euler characteristic $\chi_{_T}(\cA_{3,1}^2\otimes \cB_{3,2} )$. After constructing the list $S$ of permutations on $n+1$ elements via the function \texttt{sigma(n)},  the function \texttt{sigmaterms(S,\{i,j\},\{k,l\},n)} computes the equivariant Euler characteristic $\chi_{_T}([\OO(-D_{ij}^{k\ell})])$ in \eqref{localization formula for O(-D)} via the localization formula, and finally, the function \texttt{ECIJ(IJ,a,n)} takes the sum over all $k,\ell$ in order to obtain $\chi_{_T}(\cA_{n,i}^a\otimes \cB_{n,j}^{n-a})$.\\
    \hfill\break
    \texttt{i1:  load"EquivariantEC.m2" \vspace{1mm} \\
    i2 : EC=ECIJ(\{1,2\},2,3) \vspace{1mm} \\
    o2 = \space $\dfrac{\texttt{t}_1^2-\texttt{t}_1\texttt{t}_2+\texttt{t}_2^2 + \texttt{t}_2\texttt{t}_3+\texttt{t}_2\texttt{t}_4}{\texttt{t}_1^2}$ \\}
    \hfill\break
    The above output agrees with the output \texttt{o2} in Example \ref{examp}.
    \end{example}
    
\begin{remark}[General linearization of $\Gamma$ and $\Delta$]
The above arguments can be reformulated in terms of general divisors $\Gamma$ and $\Delta$ on $X_n$ without any choice of coordinates $i,j$. We recall the notation $T=(\mathbb C^\times)^{n+1}$ and $T'= T/\langle \mathbbm{1}\rangle$ for the big and small torus respectively, and $M_T$ and $M_{T'}$ for their corresponding character lattices. Consider the line bundle $\OO(1)$ be the line bundle on $\PP^n$ having fiber $\big[\OO(1)\big]_{[e_r]}=t_r^{-1}\in M_T$ at the $T$-fixed point $[e_r]\in \mathbb P^n$ corresponding to the $r$-th coordinate point. If one denotes by $\OO(1)_i$ the line bundle corresponding to the hyperplane $H_i\subset \mathbb P^n$, then one has the equivalence of $K$-classes $[\OO(1)_i]=t_i[\OO(1)]$ so that the fiber at $[e_r]\in \mathbb P^n$ is $\big[ \OO(1)_i\big]_{[e_r]}=t_r^{-1}t_i\in M_{T'}$. In particular, 
\[ [\OO(-\Gamma_i)]=t_i^{-1}[\OO(-\Gamma)], \qquad [\OO(-\Delta_j)]\stackrel{\clubsuit}{=}t_j[\OO(-\Delta)]\]
where the equality ($\clubsuit$) follows from the action of the Cremona map. In light of this, the $K_{_{T'}}$-class $[\OO(-D_{ij}^{k\ell})]=[\OO(-k\Gamma_i-\ell \Delta_j)]$ can be rewritten as
\[[\OO(-D_{ij}^{k\ell})]=[\OO(-\Gamma_i)]^k[\OO(-\Delta_j)]^\ell=\big( t_i^{-1}[\OO(-\Gamma)]\big)^k\big(t_j[\OO(-\Delta)]\big)^\ell=t_i^{-k}t_j^{\ell}[\OO(-k\Gamma-\ell \Delta)]\]
and from \eqref{chi(F)withSigma} one gets straightforward
\[ \chi_{_{T'}}\left( \big([\cO]-[\cO(-\Gamma_i)]\big)^a\big([\cO]-[\cO(-\Delta_j)]\big)^b \right)=\chi_{_T}\left(\big([\cO]-t_i^{-1}[\cO(-\Gamma)]\big)^a\big([\cO]-t_j[\cO(-\Delta)]\big)^b \right) \ . \]
If we denote by $\cA_{n}=[\OO]-[\OO(-\Gamma)]$ and $\cB_n=[\OO]-[\OO(-\Delta)]$, then we deduce the following formula, similar to \eqref{chi(F)withSigma} but independent from $i,j$
\begin{align*} 
    \chi_{_T}\left(\cA_n^a\otimes \cB_n^{n-a}\right) & = 1 + \sum_{k=1}^a\sum_{\ell=1}^{n-a}(-1)^{k+\ell}\binom{a}{k}\binom{n-a}{\ell}\chi_{_T}\left( [\OO(-k\Gamma-\ell \Delta)] \right) \\
    & = 1 + \sum_{k=1}^a\sum_{\ell=1}^{n-a}(-1)^{k+\ell}\binom{a}{k}\binom{n-a}{\ell}\sum_{\sigma\in \mathfrak S_{n+1}} \dfrac{t_{\sigma(1)}^{-\ell} t_{\sigma(n+1)}^{k}}{\prod_{s=1}^{n}\left(1-t_{\sigma(s)}^{-1}t_{\sigma(s+1)}\right)} \ .
    \end{align*}
    In the file \texttt{equivariantEC.m2} we have implemented the function \texttt{EquiChar(n,a)} for the computation of $\chi_{_T}(\cA_n^a\otimes \cB_n^{n-a})$ too: notice that this function differs from the function \texttt{ECIJ(\{i,j\},a,n)} only in the definition of the numerators of the summands in the above expression. For instance, for computing $\chi_{_T}(\cA_3^2\otimes \cB_3)$ one has the following input and output: \\
    \hfill\break
    \texttt{i1:  load"EquivariantEC.m2"\\
    i2 : EquiChar(3,2)\\
    o2 = \space $\texttt{t}_1+\texttt{t}_2+\texttt{t}_3+\texttt{t}_4-1$\\}
\end{remark}
\section{Recursive formula for Euler characteristic}\label{Recursive}
\indent \indent In this section we present a recursive formula for computing the equivariant Euler characteristic $\chi_{_T}(\cA_{n,i}^a\otimes \cB_{n,j}^{n-a})$. The key will be a restriction to one of the divisors $\Gamma$ or $\Delta$, which can be represented as the sum of torus-invariant divisors $D_S$, so that $D_S\cong X_{|S|-1}\times X_{|E\setminus S|-1}$. Hence, the restriction to each such a divisor, and moreover to their intersections, comes with two natural projections to lower dimensional permutohedral varieties. We use this to write the corresponding Euler characteristic as the product of the Euler characteristic of simpler line bundles on the lower dimensional permutohedral varieties.\\
\indent More precisely, let $D\subset X_n$ be an irreducible torus-invariant divisor embedded with $i:D\hookrightarrow X_n$, and let $\pi:X_n\longrightarrow \lbrace pt\rbrace$ be the projection to a point. As $X_n$ is smooth, so is $D$. We recall that the Euler characteristic map 
\[\chi_{_{X_n}}: K_T(X_n)\longrightarrow K_T(pt)\] 
is defined as the pushforward map along the projection $\pi$. Therefore, for a sheaf $\mathcal{F}$ on $X_n$, we obtain
\begin{align*}
    \chi_{_{X_n}}(([\cO]-[\cO(-D)])\otimes [\mathcal{F}])&=[\pi_*(([\cO]-[\cO(-D)])\otimes [\mathcal{F}])]\\
    &=[\pi_*([i_*\cO_D]\otimes [\mathcal{F}])]\\
    &=[\pi_*i_*([\cO_D]\otimes i^*[\mathcal{F}])]\\
     &=\chi_{_{D}}(i^*[\mathcal{F}])
\end{align*}
where the second equality follows from the exact sequence 
\[0\rightarrow\cO(-D)\rightarrow\cO \rightarrow i_*\cO_D \rightarrow 0,\]
and the third equality follows from the projection formula. We conclude that:
\begin{align}\label{eq:l=1}
     \chi_{_{X_n}}(([\cO]-[\cO(-D)])\otimes [\mathcal{F}])=\chi_{_{D}}([\mathcal{F}_{|D}]).
\end{align}
This is the key to allowing the restriction to a divisor $D$. 
\begin{theorem}
Let $D=\sum_{i=1}^\ell D_i$ be a divisor such that the divisors $D_i$ are prime, they intersect transversally and each non-empty intersection of them is smooth.
Then the following recursive formula holds:
\begin{align}\label{main}
   \chi_{_{X_n}}\bigg(([\cO]-[\cO(-D)])\otimes [\mathcal{F}]\bigg) = \sum_{\emptyset\neq \{j_1,\ldots, j_r\}\subset [\ell]}(-1)^{r+1}\chi_{_{D_{j_1}\cap \ldots \cap D_{j_r}}}\bigg(\bigg[\mathcal F_{|_{D_{j_1}\cap \ldots \cap D_{j_r}}}\bigg]\bigg) 
\end{align} 
\noindent where the sum is taken over all possible nonempty subsets of $[\ell]$.
\end{theorem}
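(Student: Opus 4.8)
The plan is to reduce the statement to a single identity in the equivariant Grothendieck ring $K_T(X_n)$, namely
\[
[\cO]-[\cO(-D)] = \sum_{\emptyset\neq\{j_1,\ldots,j_r\}\subset[\ell]}(-1)^{r+1}\,\big[\,i_{*}\cO_{D_{j_1}\cap\cdots\cap D_{j_r}}\,\big],
\]
where $i$ denotes the closed embedding of the relevant intersection into $X_n$. Granting this, the theorem follows at once: tensoring both sides with $[\mathcal F]$, applying the linear Euler characteristic map $\chi_{_{X_n}}$, and invoking for each term the projection-formula computation already carried out for a single divisor in the derivation of \eqref{eq:l=1}, one gets $\chi_{_{X_n}}\big(\big[i_*\cO_{D_S}\big]\otimes[\mathcal F]\big)=\chi_{_{D_S}}\big([\mathcal F_{|D_S}]\big)$ for each nonempty $S=\{j_1,\ldots,j_r\}$, where $D_S:=D_{j_1}\cap\cdots\cap D_{j_r}$.

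To prove the $K$-theoretic identity I would first rewrite $\cO(-D)=\cO(-D_1)\otimes\cdots\otimes\cO(-D_\ell)$, so that $[\cO(-D)]=\prod_{i=1}^\ell[\cO(-D_i)]$ in $K_T(X_n)$. Writing $[\cO_{D_i}]:=\big[\,i_{i*}\cO_{D_i}\,\big]$, the single-divisor sequence $0\to\cO(-D_i)\to\cO\to i_{i*}\cO_{D_i}\to 0$ (which is $T$-equivariant since the $D_i$ are torus-invariant) gives $[\cO(-D_i)]=[\cO]-[\cO_{D_i}]$. Expanding the product,
\[
[\cO(-D)]=\prod_{i=1}^\ell\big([\cO]-[\cO_{D_i}]\big)=\sum_{S\subset[\ell]}(-1)^{|S|}\prod_{i\in S}[\cO_{D_i}],
\]
and isolating the empty-set term $[\cO]$ yields $[\cO]-[\cO(-D)]=\sum_{\emptyset\neq S}(-1)^{|S|+1}\prod_{i\in S}[\cO_{D_i}]$. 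Thus everything reduces to identifying the product $\prod_{i\in S}[\cO_{D_i}]$ with $\big[\,i_*\cO_{D_S}\,\big]$.

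The hard part is exactly this multiplicativity, and it is here that the transversality and smoothness hypotheses are used. I would argue by induction on $|S|$. For two transversally meeting divisors the Koszul sequence
\[
0\to\cO(-D_{j}-D_{j'})\to\cO(-D_{j})\oplus\cO(-D_{j'})\to\cO\to i_*\cO_{D_{j}\cap D_{j'}}\to 0
\]
is exact, because at each point of $D_j\cap D_{j'}$ the two local equations form a regular sequence (this is precisely transversality together with smoothness of the intersection). Reading off the alternating sum in $K_T(X_n)$ gives $[\cO_{D_j}]\cdot[\cO_{D_{j'}}]=[\cO_{D_j\cap D_{j'}}]$; the inductive step intersects $D_{j_r}$ transversally with the already-smooth $D_{j_1}\cap\cdots\cap D_{j_{r-1}}$ and repeats this two-term computation. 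The only genuine obstacle is therefore verifying that the hypotheses force a regular sequence of local equations at every point of each intersection, so that the Koszul complex resolves the structure sheaf with no higher Tor-terms; the remainder is a formal expansion in $K_T(X_n)$ together with the single-divisor projection formula \eqref{eq:l=1}.
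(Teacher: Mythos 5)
Your proposal is correct, but it takes a genuinely different route from the paper. The paper proceeds by induction on $\ell$: writing $D'=\sum_{i=1}^{\ell-1}D_i$, it uses the ring identity $[\cO]-[\cO(-D'-D_\ell)]=([\cO]-[\cO(-D')])+([\cO]-[\cO(-D_\ell)])-([\cO]-[\cO(-D')])\otimes([\cO]-[\cO(-D_\ell)])$, applies the inductive hypothesis to the first and third terms (treating $([\cO]-[\cO(-D_\ell)])\otimes[\mathcal F]$ as the auxiliary sheaf in the third), and then uses only the single-divisor case \eqref{eq:l=1} to trade each factor $[\cO]-[\cO(-D_\ell)]$ for a restriction to $D_\ell$; the smoothness and transversality hypotheses enter only through the fact that $\cO(-D_\ell)$ restricted to a smooth intersection is again the ideal sheaf of a smooth divisor there. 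You instead prove a stronger, $\mathcal F$-independent statement: the identity $[\cO]-[\cO(-D)]=\sum_{\emptyset\neq S\subset[\ell]}(-1)^{|S|+1}\big[i_*\cO_{D_S}\big]$ in $K_T(X_n)$, established by expanding $\prod_i\big([\cO]-[\cO_{D_i}]\big)$ and identifying $\prod_{i\in S}[\cO_{D_i}]$ with $[\cO_{D_S}]$ via Koszul exactness, i.e.\ vanishing of higher Tor coming from the local equations forming a regular sequence; the theorem then follows by tensoring with $[\mathcal F]$ and applying the projection formula termwise. What your route buys is conceptual transparency (the inclusion-exclusion happens once, in $K$-theory, before any Euler characteristic is taken) and a reusable identity; what it costs is the heavier, though standard, commutative-algebra input on Tor-vanishing, which the paper's step-by-step induction entirely avoids by never intersecting more than one new divisor at a time. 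One small point to make explicit: the classes $[\cO(-D_i)]$ and the displayed exact sequences live in \emph{equivariant} $K$-theory, so you should state (as the intended application to the divisors $D_S$ indeed satisfies) that the $D_i$ are torus-invariant, since this is what makes all your sequences $T$-equivariant; the theorem's hypotheses as written do not literally say this.
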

\begin{proof}
We proceed by induction on $\ell$, the case $\ell=1$ being \eqref{eq:l=1}. Let $D'=\sum_{i=1}^{\ell-1} D_i$. We have
\begin{align*}
   &\chi\left( ([\cO]-[\cO(-D')])\otimes ([\cO]-[\cO(-D_\ell)])\otimes [\mathcal{F}]\right)\\
   &=\chi\left( ([\cO]-[\cO(-D')]-[\cO(-D_\ell)]+[\cO(-D'-D_\ell)])\otimes [\mathcal{F}]\right)\\
   &=\chi\left( (([\cO]-[\cO(-D')])+([\cO]-[\cO(-D_\ell)])-([\cO]-[\cO(-D'-D_\ell)]))\otimes [\mathcal{F}]\right).
\end{align*}
As $D=D'+D_\ell$ and the Euler characteristic is additive, we obtain
\begin{align*}
    \chi\left(([\cO]-[\cO(-D)])\otimes [\mathcal{F}]\right)&=\chi\left(([\cO]-[\cO(-D')])\otimes [\mathcal{F}])+\chi(([\cO]-[\cO(-D_\ell)])\otimes [\mathcal{F}]\right)\\
    & \ \ \ -\chi\left(([\cO]-[\cO(-D')])\otimes ([\cO]-[\cO(-D_\ell)])\otimes [\mathcal{F}]\right).
\end{align*}
Applying induction to the first term we obtain the part of the RHS of \eqref{main} where $\{j_1,\dots,j_r\}\subset [\ell-1]$. The second term contributes one summand $\{j_1\}=\{\ell\}$. For the last summand
\[\chi\left(([\cO]-[\cO(-D')])\otimes ([\cO]-[\cO(-D_\ell)])\otimes [\mathcal{F}]\right)\]
we again apply induction, treating $([\cO]-[\cO(-D_\ell)])\otimes [\mathcal{F}]$ as one element. We conclude by noting that:
\begin{align*}
\chi_{_{D_{j_1}\cap \ldots \cap D_{j_r}}}\left(\left[ \big(([\cO]-[\cO(-D_\ell)])\otimes [\mathcal{F}]\big)_{|_{D_{j_1}\cap \ldots \cap D_{j_r}}}\right]\right)=&
\chi_{_{D_{j_1}\cap \ldots \cap D_{j_r}\cap D_\ell}}\left(\left[\mathcal{F}_{|_{D_{j_1}\cap \ldots \cap D_{j_r}\cap D_\ell}}\right]\right),
\end{align*}
by the case $\ell=1$. Thus the last term corresponds to $r$-tuples $j_1,\dots,j_r$ where $r>1$ and $j_r=\ell$. This concludes the inductive step and hence the proof.
\end{proof}
 For a flag $S_1\subsetneq S_2\subsetneq \ldots \subsetneq S_k$ of proper non-empty subsets of $E$, consider the corresponding intersection:
$$D_{S_1,\ldots,S_k}:=D_{S_1}\cap \cdots \cap D_{S_k}. $$
\begin{corollary}\label{resTOGamma}
By restriction to $\Gamma_i$, for $a>0$, it holds
\begin{align*} 
    \chi_{_{X_n}}(\cA_{n,i}^a\otimes \cB_{n,j}^{n-a} )& =\chi_{_{X_n}}(\cA_{n,i}\otimes\cA_{n,i}^{a-1}\otimes \cB_{n,j}^{n-a})\\
    & = \sum_{k=1}^{n}\sum_{i \in S_1\subsetneq \ldots \subsetneq S_k}(-1)^{k+1}\chi_{_{D_{S_1,\ldots, S_k}}}\left({\cA_{n,i}^{a-1}}_{|_{D_{S_1,\ldots,S_k}}}\otimes {\cB_{n,j}^{n-a}}_{|_{D_{S_1,\ldots,S_k}}}\right)
    \end{align*}
    \noindent where, after fixing $k$, the second sum is taken over all possible flags of length $k$ whose first subset contains $i$. A similar formula holds for restriction to $\Delta_j$, where the second sum runs over all possible flags not containing $j$.
\end{corollary}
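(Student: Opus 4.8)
The plan is to apply the recursive formula \eqref{main} directly, with $D = \Gamma_i$ and with $\mathcal{F} = \cA_{n,i}^{a-1} \otimes \cB_{n,j}^{n-a}$ treated as a single $K$-class. The first displayed equality is only the notational identity $\cA_{n,i}^a = \cA_{n,i} \otimes \cA_{n,i}^{a-1}$, so all the content lies in the second line.

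First I would check that the hypotheses of the theorem are met. By \eqref{Gamma_i, Delta_j} one has $\Gamma_i = \sum_{i \in S} D_S$, which exhibits $\Gamma_i$ as a reduced sum of distinct prime $T$-invariant divisors indexed by the proper non-empty subsets $S \subset [n+1]$ containing $i$. Since $X_n$ is a smooth toric variety, any family of distinct prime $T$-invariant divisors automatically meets transversally, and each non-empty intersection is the orbit closure attached to the cone generated by the corresponding rays, hence smooth. Thus the transversality and smoothness assumptions hold for free, and \eqref{main} applies.

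After applying \eqref{main}, the right-hand side is a signed sum $\sum (-1)^{r+1}\chi_{D_{S_1}\cap\cdots\cap D_{S_r}}(\mathcal{F}_{|\cdots})$ ranging over all non-empty collections $\{S_1,\ldots,S_r\}$ of subsets each containing $i$. The decisive step is to decide which of these intersections are non-empty. In a toric variety $D_{S_1}\cap\cdots\cap D_{S_r}$ is non-empty exactly when the rays $u_{S_1},\ldots,u_{S_r}$ span a cone of $\Sigma_{X_n}$, and otherwise it is empty so that the corresponding Euler characteristic vanishes. By the description of the permutohedral fan underlying \eqref{bij S_n X^T}, a set of rays $u_{S_1},\ldots,u_{S_r}$ spans a cone if and only if $\{S_1,\ldots,S_r\}$ is a chain under inclusion, i.e. a flag; the maximal cones $C_\sigma$ correspond precisely to the maximal flags, and all cones are their faces. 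Retaining only the surviving (flag) collections, writing them as $i \in S_1 \subsetneq \cdots \subsetneq S_k$ with $k = r$, and recording $(-1)^{r+1} = (-1)^{k+1}$, I obtain exactly the stated formula with $D_{S_1,\ldots,S_k} = D_{S_1}\cap\cdots\cap D_{S_k}$. The $\Delta_j$ variant is identical upon replacing $\Gamma_i = \sum_{i \in S} D_S$ by $\Delta_j = \sum_{j \notin S} D_S$: the prime divisors are now those $D_S$ with $j \notin S$, so the surviving flags are the $S_1 \subsetneq \cdots \subsetneq S_k$ with $j \notin S_k$, equivalently $j$ belonging to none of the $S_m$.

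I expect the combinatorial identification of non-empty intersections with flags to be the only real point; everything else is either the notation-level first equality, the purely formal application of \eqref{main}, or the automatic smoothness and transversality coming from $X_n$ being a smooth toric variety. The one thing to state carefully is that an empty intersection contributes a zero Euler characteristic, which is precisely what turns the theorem's sum over arbitrary non-empty subsets of primes into a sum over flags containing $i$.
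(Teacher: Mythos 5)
Your proof is correct and follows essentially the same route as the paper, which states the corollary without proof precisely because it is the direct specialization of the recursive theorem to $D=\Gamma_i=\sum_{i\in S}D_S$, with the sum over arbitrary collections of prime divisors collapsing to a sum over flags because non-chain collections $\{S_1,\ldots,S_r\}$ have empty intersection in the permutohedral fan. Your explicit verification of the transversality/smoothness hypotheses and of the flag--cone correspondence (non-empty $D_{S_1}\cap\cdots\cap D_{S_r}$ if and only if the $u_{S_m}$ span a cone, if and only if the $S_m$ form a chain) supplies exactly the details the paper leaves implicit.
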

\begin{theorem}\label{thm:rec}
By restriction to $\Gamma_i$, for $a>0$, it holds
{\small \begin{align*}\label{eq:fin}
     \chi_{_{X_n}}(\cA_{n,i}^a\otimes \cB_{n,j}^{n-a} )&=\sum_{i \in S\subsetneq E}\chi\left(\cA_{_{|E\setminus S|-1,i}}^{a-1} \right)\chi\left(\cB_{_{|S|-1,j}}^{n-a} \right)\\
     & \ \ + \sum_{k=2}^{n}\sum_{i \in S_1\subsetneq S_k\subsetneq E}(-1)^{k+1}(k-1)!\stirlingii{|S_k\setminus S_1|}{k-1}\chi\left(\cA_{_{|E\setminus S_{k}|-1,i}}^{a-1} \right)\chi\left(\cB_{_{|S_1|-1,j}}^{n-a} \right)\\
    & = \sum_{i \in S\subsetneq E}\chi\left(\cA_{_{|E\setminus S|-1,i}}^{a-1} \right)\chi\left(\cB_{_{|S|-1,j}}^{n-a} \right) + \sum_{i \in S\subsetneq S'\subsetneq E}(-1)^{|S'\setminus S|}\chi\left(\cA_{_{|E\setminus S'|-1,i}}^{a-1} \right)\chi\left(\cB_{_{|S|-1,j}}^{n-a} \right)
\end{align*} }
where $\stirlingii{|S_k\setminus S_1|}{k-1}$ is the Stirling function counting the number of $k-1$ partitions of the set $S_k\setminus S_1$ into non-empty subsets, and we set $\stirlingii{*}{0}=1$. A similar formula holds for restriction to $\Delta_j$, where the second sum runs over all possible $2$-flags not containing $j$.
\end{theorem}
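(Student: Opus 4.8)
The plan is to start from Corollary \ref{resTOGamma}, which already writes $\chi_{X_n}(\cA_{n,i}^a\otimes\cB_{n,j}^{n-a})$ as an alternating sum over flags $i\in S_1\subsetneq\cdots\subsetneq S_k$ of the restricted Euler characteristics $\chi_{D_{S_1,\ldots,S_k}}\big(\cA_{n,i}^{a-1}|_{D}\otimes\cB_{n,j}^{n-a}|_{D}\big)$, and to evaluate each such term. The geometric input is that for a flag the intersection $D_{S_1,\ldots,S_k}$ splits as a product of lower-dimensional permutohedral varieties
\[ D_{S_1,\ldots,S_k}\ \cong\ X_{S_1}\times X_{S_2\setminus S_1}\times\cdots\times X_{S_k\setminus S_{k-1}}\times X_{E\setminus S_k}, \]
where for a set $U$ I write $X_U$ for the permutohedral variety on ground set $U$ (a point when $|U|=1$). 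Since $\chi$ is multiplicative over products, everything reduces to understanding how $\cA_{n,i}$ and $\cB_{n,j}$ restrict to this product.

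The heart of the argument — and the step I expect to be the main obstacle — is a clean restriction statement: on $D_S\cong X_S\times X_{E\setminus S}$ one has $\Gamma_i|_{D_S}=\Gamma_i^{X_{E\setminus S}}$ supported on the second factor, and $\Delta_j|_{D_S}=\Delta_j^{X_S}$ supported on the first. To prove it I would expand $\Gamma_i=\sum_{i\in T}D_T$ and restrict term by term, using that $D_T|_{D_S}$ is pulled back from $X_S$ when $T\subsetneq S$, from $X_{E\setminus S}$ (as $D_{T\setminus S}$) when $T\supsetneq S$, and is zero when $T,S$ are incomparable. The subtlety is the self-intersection term $D_S|_{D_S}$, which appears precisely because $i\in S$. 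I would pin it down by adjunction together with the observation that $\Delta_i$ with $i\in S$ does not contain $D_S$ and hence restricts cleanly as $\Delta_i^{X_S}$: comparing $(-K_{X_n})|_{D_S}=\Gamma_i|_{D_S}+\Delta_i|_{D_S}$ with $-K_{D_S}+D_S|_{D_S}$ and using \eqref{canonical divisor} forces $D_S|_{D_S}=(-\Gamma^{X_S})\boxplus(-\Delta^{X_{E\setminus S}})$, and this is exactly the correction that cancels the first-factor contribution of $\Gamma_i$ and trims the second-factor contribution from $-K^{X_{E\setminus S}}$ down to $\Gamma^{X_{E\setminus S}}$. Iterating the single-divisor statement along the flag then gives $\Gamma_i|_{D_{S_1,\ldots,S_k}}=\Gamma_i^{X_{E\setminus S_k}}$ on the last factor and $\Delta_j|_{D_{S_1,\ldots,S_k}}=\Delta_j^{X_{S_1}}$ on the first. (Alternatively one can argue that $p|_{D_S}$ and $q|_{D_S}$ factor through the two projections, so the pullbacks $\Gamma_i=p^\ast H_i$ and $\Delta_j=q^\ast\hat H_j$ are automatically supported on complementary factors.)

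With this in hand, $\cA_{n,i}^{a-1}|_{D}$ becomes $1\boxtimes\cdots\boxtimes 1\boxtimes\cA_{|E\setminus S_k|-1,i}^{a-1}$ and $\cB_{n,j}^{n-a}|_{D}$ becomes $\cB_{|S_1|-1,j}^{n-a}\boxtimes 1\boxtimes\cdots\boxtimes 1$. Multiplicativity of $\chi$ over the product, together with $\chi([\OO])=1$ from \eqref{chi(O)} on each intermediate factor, yields
\[ \chi_{D_{S_1,\ldots,S_k}}\big(\cA_{n,i}^{a-1}|_{D}\otimes\cB_{n,j}^{n-a}|_{D}\big)=\chi\big(\cA_{|E\setminus S_k|-1,i}^{a-1}\big)\,\chi\big(\cB_{|S_1|-1,j}^{n-a}\big), \]
so each term depends only on the two ends $S_1$ and $S_k$ of the flag.

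Finally I would regroup the sum in Corollary \ref{resTOGamma} according to the pair $(S_1,S_k)=(S,S')$. For fixed ends, the length-$k$ flags interpolating between them are in bijection with ordered set partitions of $S'\setminus S$ into $k-1$ nonempty blocks, of which there are exactly $(k-1)!\stirlingii{|S'\setminus S|}{k-1}$; separating the $k=1$ (i.e. $S=S'$) contribution gives the first displayed formula. To reach the closed form I would collapse the inner sum over $k$ using the Stirling identity $\sum_{p\geq 0}(-1)^p\,p!\stirlingii{m}{p}=(-1)^m$, which follows by evaluating $x^m=\sum_p\stirlingii{m}{p}x(x-1)\cdots(x-p+1)$ at $x=-1$: with $m=|S'\setminus S|\geq 1$ and $p=k-1$ (and $\stirlingii{m}{0}=0$ for $m\geq 1$) the coefficient becomes $(-1)^{|S'\setminus S|}$, which is precisely the sign in the second expression. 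The restriction-to-$\Delta_j$ version is identical after exchanging the roles of the two projections, with the flags now avoiding $j$.
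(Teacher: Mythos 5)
Your proposal follows the same route as the paper's own proof: start from Corollary \ref{resTOGamma}, use the product decomposition $D_{S_1,\ldots,S_k}\cong X_{|S_1|-1}\times\cdots\times X_{|E\setminus S_k|-1}$, observe that $\cA_{n,i}$ and $\cB_{n,j}$ restrict to classes pulled back from the two extreme factors, factor the Euler characteristic by K\"unneth together with $\chi([\OO])=1$ on the middle factors, regroup flags by their ends $(S_1,S_k)$ using the count $(k-1)!\stirlingii{|S_k\setminus S_1|}{k-1}$ of ordered partitions, and collapse the sum over $k$ via $\sum_{p\geq 1}(-1)^p p!\stirlingii{m}{p}=(-1)^m$, obtained by substituting $x=-1$ into $x^m=\sum_p\stirlingii{m}{p}x(x-1)\cdots(x-p+1)$ --- which is literally the paper's second proof of that identity. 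All of this bookkeeping (signs, the bijection between length-$k$ flags with fixed ends and ordered partitions of $S_k\setminus S_1$ into $k-1$ blocks) is correct.

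The one flawed step is your primary derivation of the self-intersection class: adjunction cannot ``force'' $D_S|_{D_S}$, because for a toric prime divisor adjunction is \emph{identically} the term-by-term restriction formula. Concretely, restricting any invariant divisor $\sum_T a_T D_T$ to $D_S$ gives $\sum_{T\subsetneq S}a_T\,\pi_1^*D_T+\sum_{T\supsetneq S}a_T\,\pi_2^*D_{T\setminus S}+a_S\,\bigl(D_S|_{D_S}\bigr)$; applying this to $-K_{X_n}=\sum_T D_T$ yields $(-K_{X_n})|_{D_S}=-K_{D_S}+D_S|_{D_S}$ no matter what the class $D_S|_{D_S}$ is. Equivalently, in your equation $\Gamma_i|_{D_S}+\Delta_i|_{D_S}=-K_{D_S}+D_S|_{D_S}$ the unknown $D_S|_{D_S}$ also sits inside $\Gamma_i|_{D_S}$ (precisely because $i\in S$, the expansion $\Gamma_i=\sum_{i\in T}D_T$ contains the summand $D_S$), so the two occurrences cancel and nothing is determined. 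What saves your proof is exactly the parenthetical alternative you mention, which should be promoted to the main argument: $p$ maps $D_{S_1,\ldots,S_k}$ onto the coordinate subspace $\{x_i=0:\, i\in S_k\}\cong\PP^{|E\setminus S_k|-1}$ and $p|_{D_{S_1,\ldots,S_k}}$ factors through the projection onto the factor $X_{|E\setminus S_k|-1}$ followed by its blow-down, so $\Gamma_i=p^*H_i$ restricts to a pullback from that factor (dually, $q$ factors through the first factor, handling $\Delta_j$); this argument moreover keeps track of the equivariant structure, whose residual character twist is what the indices in $\cA_{|E\setminus S_k|-1,i}$ and $\cB_{|S_1|-1,j}$ record. (Another repair: replace $\Gamma_i$ by the linearly equivalent $\Gamma_{j'}$ for some $j'\notin S_k$, whose expansion contains no $D_{S_h}$ and hence restricts cleanly, then adjust by the character $t_i/t_{j'}$.) Note that the paper itself merely asserts these restriction isomorphisms, so on this point you attempted more than the paper does --- just make sure the justification you keep is the valid one.
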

\begin{proof}
    We note that each intersection can be described as a product of lower dimensional permutohedral varieties
$$ D_{S_1,\ldots, S_k}\cong X_{|S_1|-1}\times X_{|S_2\setminus S_1|-1}\times \ldots \times X_{|S_k\setminus S_{k-1}|-1}\times X_{|E\setminus S_k|-1}$$
where the torus $(\CC^\times)^{|S_i\setminus S_{i-1}|}$ with coordinates $(t_\ell)_{\ell\in S_i\setminus S_{i-1}}$ acts on the $i$-th factor. Thus, one has two natural projections onto lower-dimensional permutohedral varieties
\begin{center}
	\begin{tikzpicture}[scale=2.5]
	\node(D) at (0,1.2){$D_{S_1\ldots S_k}$};
	\node(X) at (0,0.6){$X_{|S_1|-1}\times X_{|E\setminus S_k|-1}$};
	 \node(PP) at (-0.6,0){$X_{|S_1|-1}$};
	\node(PP^q) at (0.6,0){$X_{|E\setminus S_k|-1}$};
\path[font=\scriptsize,>= angle 90]
	(PP)  (PPvee)
	(D) edge [->] node [left] {$\phi$} (X)
	(X) edge [->] node [left] {$\pi_1$} (PP)
	(X) edge [->] node [right] {$\pi_2$} (PPvee);	
	\end{tikzpicture}
\end{center}
so that 
\begin{align*}
    {\cA_{n,i}}_{|_{D_{S_1\ldots S_k}}}\cong \phi^*\circ \pi_2^*(\cA_{_{|E\setminus S_k|-1,i}}), \ \ \ \ 
     {\cB_{n,j}}_{|_{D_{S_1\ldots S_k}}} \cong\phi^*\circ \pi_1^*(\cB_{_{|S_1|-1,j}})
\end{align*}
Therefore, we obtain
\begin{align*}
 \chi_{_{D_{S_1\ldots S_k}}}\left({\cA_{n,i}^{a-1}}_{|_{D_{S_1\ldots S_k}}}\otimes {\cB_{n,j}^{n-a}}_{|_{D_{S_1\ldots S_k}}}\right)&=\chi_{_{X_{|S_1|-1}\times X_{|E\setminus S_k|-1}}}\left(\pi_2^*(\cA^{a-1}_{_{|E\setminus S_k|-1,i}}) \otimes \pi_1^*(\cB^{n-a}_{_{|S_1|-1,j}})\right) \\
 &=\chi_{_{X_{|E\setminus S_k|-1}}}\left(\cA^{a-1}_{_{|E\setminus S_k|-1,j}}\right)\chi_{_{X_{|S_1|-1}}}\left(\cB^{n-a} _{_{|S_1|-1,j}}\right),
\end{align*}
 where the second equality follows from the K\"{u}nneth formula, and the first one follows from the definition of the Euler characteristic, as pushforward map along the projection to a point. By Corollary \ref{resTOGamma}, we obtain
 \begin{align*}
      \chi_{_{X_n}}(\cA_{n,i}^a\otimes \cB_{n,j}^{n-a} ) &= \sum_{k=1}^{n}\sum_{i \in S_1\subsetneq \ldots \subsetneq S_k}(-1)^{k+1}\chi\left(\cA_{_{|E\setminus S_{k}|-1,i}}^{a-1} \right)\chi\left(\cB_{_{|S_1|-1,j}}^{n-a} \right)
 \end{align*}
 where, after fixing $k$, the second sum is taken over all possible flags of length $k$ whose first subset contains $i$. Since only the tail and the head of a flag are seen by the formula, we can reduce to length-$2$ flags $S_1\subsetneq S_k$ and use the Stirling function $\stirlingii{|S_k\setminus S_1|}{k-1}$, multiplied with $(k-1)!$, to count the number of $k$-flags with such a given head and tail. 

 To obtain the final formula we need to prove that for any $s>0$:
 \[\sum_{k=1}^{s} (-1)^{k} k! \stirlingii{s}{k}=(-1)^{s}.\]

 The first proof of the above formula is by induction on $s$, the case $s=1$ being trivial. For $s>1$, we note that $k! \stirlingii{s}{k}$ counts the number of ordered partitions $(A_1,\dots, A_k)$ of the set \{1,\dots,s\}. All partitions for which $A_1\neq \{s\}$ can be paired as follows. If $A_l=\{s\}$ then we pair $(A_1,\dots, A_k)$ with $(A_1,\dots, A_{l-2}, A_{l-1}\cup \{s\}, A_{l+1},\dots, A_k)$. Similarly, if each $A_l\neq \{s\}$ then we must have $s\in A_l$ for some $l$. We pair the partition by splitting $A_l$ into $A_l\setminus \{s\}$, followed by $\{s\}$. Each pair contributes exactly one $1$ and one $-1$ to the sum. Thus it is enough to compute the contributions when $A_1=\{s\}$, which we know by induction. 

 For the second proof we recall the well-known formula:
 \[\sum_{k=0}^{s} \stirlingii{s}{k} x(x-1)\cdots (x-(k-1))=(x)^s.\]
 Our formula follows by substituting $x=-1$.
\end{proof}
    We remark  that in the computation of the terms $\chi(\cA_{n,i}^{a-1})$ and $\chi(\cB_{n,j}^{n-a})$ in Theorem \ref{thm:rec}, one can restrict further, respectively to $\Gamma_i$ and $\Delta_j$, so that the computation boils down to the computation of the Euler characteristic over a point.\\

    Julian Weigert implemented our recursive formula following the described procedure:
    \begin{enumerate}
        \item Precompute the Euler characteristic $\chi_{_{X_n}}(\cA_{n}^a)$ for various $n$ and $a$. This is achieved by using the recursive approach described above for computing $\chi_{_{X_n}}(\cA_{n,i}\otimes \cA_{n}^{a-1})$.
        \item From the previous point one easily obtains $\chi_{_{X_n}}(\cA_{n,i}^a)$ and $\chi_{_{X_n}}(\cB_{n,j}^{b} )$ by substitution.
        \item The final formula is obtained as the last one in Theorem \ref{thm:rec}.
    \end{enumerate}

We may reconstruct the formula for the usual Euler characteristic from our formulas for the equivariant Euler characteristic. We recall that the reduced chromatic polynomial of the uniform matroid on $E$ of rank $r$ equals:
\[\sum_{i=1}^{r} (-1)^{r-i} \binom{n}{r-i}t^i.\]
This is equivalent to the following corollary.
\begin{corollary}
In the non-equivariant setting:
\[\chi_{_{X_n}}(\cA_{n,i}^a\otimes \cB_{n,j}^{n-a} )=\binom{n}{a}.\]
\end{corollary}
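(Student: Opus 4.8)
The plan is to specialize the recursive formula of Theorem \ref{thm:rec} to the non-equivariant setting (equivalently, to evaluate the equivariant answer at $(t_1,\dots,t_{n+1})=(1,\dots,1)$) and to check that the combinatorics collapses to a single binomial coefficient. The first ingredient I need is the non-equivariant value of the lower-dimensional building blocks $\chi(\cA_{m,i}^c)$ and $\chi(\cB_{m,j}^c)$ on $X_m$. Expanding $\cA_{m,i}^c=\sum_{k=0}^c(-1)^k\binom{c}{k}[\OO(-k\Gamma_i)]$ and using $\chi([\OO])=1$ from \eqref{chi(O)} together with the vanishing $\chi([\OO(-k\Gamma_i)])=0$ for $1\le k\le m$ from Proposition \ref{k0ORl0}, I obtain $\chi(\cA_{m,i}^c)=1$ whenever $0\le c\le m$. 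For the remaining range $c>m$ the cohomological vanishing no longer covers all terms, so here I would instead invoke Hirzebruch--Riemann--Roch: $\ch(\cA_{m,i}^c)=(1-e^{-\Gamma_i})^c$ lies in $A^{\ge c}(X_m)\otimes\mathbb{Q}$, which is zero once $c>m=\dim X_m$, whence $\chi(\cA_{m,i}^c)=0$. The same holds for $\cB_{m,j}^c$ by the $\Gamma$--$\Delta$ symmetry already exploited in Proposition \ref{k0ORl0}. In short, each block equals $1$ if its exponent does not exceed the dimension and $0$ otherwise.

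With these values in hand I would substitute into the final formula of Theorem \ref{thm:rec},
\[
\chi_{_{X_n}}(\cA_{n,i}^a\otimes\cB_{n,j}^{n-a})
=\sum_{i\in S\subsetneq E}\chi\!\left(\cA_{_{|E\setminus S|-1,i}}^{a-1}\right)\chi\!\left(\cB_{_{|S|-1,j}}^{n-a}\right)
+\sum_{i\in S\subsetneq S'\subsetneq E}(-1)^{|S'\setminus S|}\chi\!\left(\cA_{_{|E\setminus S'|-1,i}}^{a-1}\right)\chi\!\left(\cB_{_{|S|-1,j}}^{n-a}\right),
\]
and analyze the two sums separately (taking $a\ge 1$, as required for the restriction to $\Gamma_i$; the case $a=0$ is the block value $\chi(\cB_{n,j}^n)=1=\binom{n}{0}$ directly). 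In the first sum the $\cA$-factor is nonzero exactly when $a-1\le |E\setminus S|-1$, i.e.\ $|S|\le n+1-a$, while the $\cB$-factor is nonzero exactly when $n-a\le |S|-1$, i.e.\ $|S|\ge n+1-a$. Hence a term survives if and only if $|S|=n+1-a$, in which case it contributes $1$. Counting the admissible $S$ — those containing $i$, of size $n+1-a$, automatically proper for $a\ge 1$ — gives $\binom{n}{n-a}=\binom{n}{a}$, so the first sum already yields the claimed value.

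It then remains to show that the second sum vanishes, which I expect to be the cleanest part: the same nonvanishing conditions force $|S'|\le n+1-a$ for the $\cA$-factor and $|S|\ge n+1-a$ for the $\cB$-factor, yet $S\subsetneq S'$ imposes $|S|<|S'|$; these three constraints are jointly contradictory, so no term contributes and the whole sum is zero. Combining the two sums gives $\chi_{_{X_n}}(\cA_{n,i}^a\otimes\cB_{n,j}^{n-a})=\binom{n}{a}$. The only genuinely delicate point, and the one I would be most careful about, is the determination of the base values $\chi(\cA_{m,i}^c)$ in the two regimes $c\le m$ and $c>m$: the boundary case $c=m$ is exactly what makes the surviving terms of the first sum evaluate to $1$, while the sharp vanishing for $c>m$ (needing the degree/dimension argument rather than Proposition \ref{k0ORl0} alone) is what kills the off-diagonal contributions. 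As a consistency check, one recovers the same answer directly from Hirzebruch--Riemann--Roch and Huh's intersection number $\int_{X_E}[\Gamma_i]^a[\Delta_j]^{n-a}=\binom{n}{a}$.
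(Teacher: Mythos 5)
Your proof is correct and follows essentially the same route as the paper: specialize Theorem \ref{thm:rec} to the non-equivariant setting, use the fact that a power exceeding the dimension gives zero to show that only the single-subset terms with $|S|=n+1-a$ survive, observe each contributes $1$, and count that there are $\binom{n}{a}$ such subsets. The only differences are in the fine print: where the paper asserts the dimension vanishing and deduces that each surviving contribution equals one by induction on the corollary itself, you instead compute the block values $\chi(\cA_{m,i}^c)$ directly (Proposition \ref{k0ORl0} for $c\le m$, a Chern-character degree argument for $c>m$), and you handle $a=0$ explicitly, which the paper leaves implicit since Theorem \ref{thm:rec} assumes $a>0$.
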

\begin{proof}
    We note that intersecting more than dimension of the variety many divisors always return zero in the non-equivariant setting. In particular, the only non-zero contributions in Theorem \ref{thm:rec} may come from summands where both $a-1\leq |E\setminus S_k|-1$ and $n-a\leq |S_1|-1$. This is only possible when $S_1=S_k$, i.e.~$k=1$ and $|S_1|=n-a+1$. Applying further the induction, we see that each such contribution equals one. Thus we have to compute the number of subsets of $E$ of cardinality $n-a+1$ that contain a fixed index $i$. Clearly, there are $\binom{n}{n-a}=\binom{n}{a}$ such subsets.
\end{proof}

We recall that, in order to determine the Euler characteristic of a sheaf, one needs to determine its class as a polynomial at each torus fixed point corresponding to a permutation $\sigma$. In particular, when restricting to $D_{S_1,\ldots, S_k}$, one needs to compute the class of the line bundle at the torus fixed points on $D_{S_1,\ldots, S_k}$, which correspond to the permutations in $\mathfrak S_{n+1}$ of the form $(\sigma_1,\sigma_{2},\ldots,\sigma_k,\sigma_{e-k})$ so that $\sigma_i$ is a permutation of the set $S_i\setminus S_{i-1}$, and $\sigma_{e-k}$ is a permutation of the elements in $E\setminus S_k$. In the following, we recompute our example in the recursive approach by explicitly exhibiting the contribution of the intersection terms.

\begin{example}
We compute $\chi(\cA_{3,1}^2\otimes \cB_{3,2})$ on the $3$-dimensional permutohedral. In this case, considering the restriction to $\cA_{3,1}$, there are $7$ divisors, corresponding to proper non-empty subsets of $\lbrace 1,2,3,4\rbrace$ containing $1$, that is $D_1, D_{12}, D_{13}, D_{14}, D_{123}, D_{124}, D_{134}$ intersecting as demonstrated in the picture, and respectively colored in red, three blue, and three green.
\begin{figure}[h]
        \centering
        \includegraphics[width=3cm]{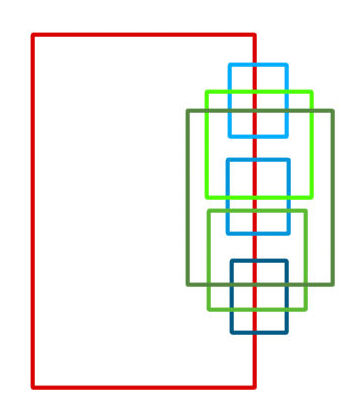}
        \label{div}
    \end{figure}
    \\
We first consider $D_{12}\cong X_{12}\times X_{34}$ with four torus fixed points corresponding to the permutations $1234, 1243, 2134, 2143$, where the torus $(\CC^\times)^2$ with coordinates $(t_1,t_2)$ and $(t_3,t_4)$ acts respectively on each factor. Using the recursive formula, and the fact that $\chi(\cA_{n,i})=\chi(\cB_{n,j})=1$ for $n\geq 1$, we have
$$\chi_{_{D_{12}}}(\cA_{3,1}\otimes \cB_{3,2} )=\chi(\cA_{1,1})\chi(\cB_{1,2}) = 1$$
Similarly, $\chi_{_{D_{13}}}(\cA_{3,1}\otimes \cB_{3,2} )=\chi_{_{D_{14}}}(\cA_{3,1}\otimes \cB_{3,2} )=1$.
Now, we consider the restriction to $D_1\cong X_1\times X_{234}$ which contains $6$ torus fixed points, and we have
$$
\chi_{_{D_1}}(\cA_{3,1}\otimes \cB_{3,2} )=\chi(\cA_{2,1})\chi(\cB_{0,2})= \chi(\cB_{0,2})=(1-t_2t_1^{-1})
$$
Now, considering one of the intersections, $D_1\cap D_{12}\cong X_1\times X_2\times X_{34}$, we obtain
$$\chi_{_{D_1\cap D_{12}}}(\cA_{3,1}\otimes \cB_{3,2})= \chi(\cA_{1,1})\chi(\cB_{0,2})= \chi(\cB_{0,2})=(1-t_2t_1^{-1}).$$
With the same argument for the remaining (intersection) terms, we have
\begin{align*}
 &\chi_{_{D_{123}}}(\cA_{3,1}\otimes \cB_{3,2} )=\chi(\cA_{0,1})\chi(\cB_{2,2})=(1-t_4t_1^{-1})\\
& \chi_{_{D_{134}}}(\cA_{3,1}\otimes \cB_{3,2} )=\chi(\cA_{0,1})\chi(\cB_{2,2})=(1-t_2t_1^{-1}) \\
& \chi_{_{D_{124}}}(\cA_{3,1}\otimes \cB_{3,2} )=\chi(\cA_{0,1})\chi(\cB_{2,2})=(1-t_3t_1^{-1}) \\
& \chi_{_{D_1\cap D_{13}}}(\cA_{3,1}\otimes \cB_{3,2})= \chi(\cA_{1,1})\chi(\cB_{0,2})=(1-t_2t_1^{-1})\\
& \chi_{_{D_1\cap D_{14}}}(\cA_{3,1}\otimes \cB_{3,2})= \chi(\cA_{1,1})\chi(\cB_{0,2})=(1-t_2t_1^{-1})\\
& \chi_{_{D_{12}\cap D_{123}}}(\cA_{3,1}\otimes \cB_{3,2})= \chi(\cA_{0,1})\chi(\cB_{1,2})=(1-t_4t_1^{-1})\\
 &\chi_{_{D_{12}\cap D_{124}}}(\cA_{3,1}\otimes \cB_{3,2})= \chi(\cA_{0,1})\chi(\cB_{1,2})=(1-t_3t_1^{-1})\\
& \chi_{_{D_{13}\cap D_{123}}}(\cA_{3,1}\otimes \cB_{3,2})= \chi(\cA_{0,1})\chi(\cB_{1,2})=(1-t_4t_1^{-1})\\
 &\chi_{_{D_{13}\cap D_{134}}}(\cA_{3,1}\otimes \cB_{3,2})= \chi(\cA_{0,1})\chi(\cB_{1,2})=(1-t_2t_1^{-1})\\
& \chi_{_{D_{14}\cap D_{134}}}(\cA_{3,1}\otimes \cB_{3,2})= \chi(\cA_{0,1})\chi(\cB_{1,2})=(1-t_2t_1^{-1})\\
& \chi_{_{D_{14}\cap D_{124}}}(\cA_{3,1}\otimes \cB_{3,2})= \chi(\cA_{0,1})\chi(\cB_{1,2})=(1-t_3t_1^{-1})\\
& \chi_{_{D_1\cap D_{123}}}(\cA_{3,1}\otimes \cB_{3,2})= \chi(\cA_{0,1})\chi(\cB_{0,2})=(1-t_4t_1^{-1})(1-t_2t_1^{-1})\\
 & \chi_{_{D_1\cap D_{124}}}(\cA_{3,1}\otimes \cB_{3,2})= \chi(\cA_{0,1})\chi(\cB_{0,2})=(1-t_3t_1^{-1})(1-t_2t_1^{-1})\\
  & \chi_{_{D_1\cap D_{134}}}(\cA_{3,1}\otimes \cB_{3,2})= \chi(\cA_{0,1})\chi(\cB_{0,2})=(1-t_2t_1^{-1})(1-t_2t_1^{-1})
  \end{align*}
  \begin{align*}
   & \chi_{_{D_1\cap D_{12}\cap D_{123}}}(\cA_{3,1}\otimes \cB_{3,2})= \chi(\cA_{0,1})\chi(\cB_{0,2})=(1-t_4t_1^{-1})(1-t_2t_1^{-1})\\
   & \chi_{_{D_1\cap D_{12}\cap D_{124}}}(\cA_{3,1}\otimes \cB_{3,2})= \chi(\cA_{0,1})\chi(\cB_{0,2})=(1-t_3t_1^{-1})(1-t_2t_1^{-1})\\
    & \chi_{_{D_1\cap D_{13}\cap D_{123}}}(\cA_{3,1}\otimes \cB_{3,2})= \chi(\cA_{0,1})\chi(\cB_{0,2})=(1-t_4t_1^{-1})(1-t_2t_1^{-1})\\
    & \chi_{_{D_1\cap D_{13}\cap D_{134}}}(\cA_{3,1}\otimes \cB_{3,2})= \chi(\cA_{0,1})\chi(\cB_{0,2})=(1-t_2t_1^{-1})(1-t_2t_1^{-1})\\
    & \chi_{_{D_1\cap D_{14}\cap D_{124}}}(\cA_{3,1}\otimes \cB_{3,2})= \chi(\cA_{0,1})\chi(\cB_{0,2})=(1-t_3t_1^{-1})(1-t_2t_1^{-1})\\
      & \chi_{_{D_1\cap D_{14}\cap D_{134}}}(\cA_{3,1}\otimes \cB_{3,2})= \chi(\cA_{0,1})\chi(\cB_{0,2})=(1-t_2t_1^{-1})(1-t_2t_1^{-1})
\end{align*}
Now, considering the alternating sum, we obtain 
\[\chi(\cA_{3,1}^2\otimes \cB_{3,2})=1-t_2t_1^{-1}+t_2^2t_1^{-2}+t_2t_3t_1^{-2}+t_2t_4t_1^{-2} \ , \]
as in Example \ref{examp}. In  \texttt{EquivariantEC.m2}, the function \texttt{eulerCharacteristic(3,2,1,2)} computes this Euler characterstic.
\end{example}

\end{document}